\theoremstyle{plain}
\newtheorem{theorem}{Theorem}
\newtheorem{corollary}{Corollary}
\newtheorem{lemma}{Lemma}
\theoremstyle{definition}
\theoremstyle{remark}
\numberwithin{equation}{section}
\def\K#1#2{\displaystyle{\mathop K\limits_{#1}^{#2}}}
\newdimen\plusheight
\def\+{\;\lower\plusheight\hbox{$+$}\;}
\newdimen\minusheight
\def\-{\;\lower\minusheight\hbox{$-$}\;}
\newdimen\cdotsheight
\def\cds{\lower\cdotsheight\hbox{$\cdots$}}
\begin{document}


%
%


\title{A $Q$-CONTINUED FRACTION}

 \author{DOUGLAS BOWMAN}
\address{ Northern Illinois University\\
   Mathematical Sciences\\
   DeKalb, IL 60115-2888 }
 \email{bowman@math.niu.edu}

 \author{JAMES MC LAUGHLIN}
\address{Mathematics Department\\
 25 University Avenue\\
West Chester University, West Chester, PA 19383}
\email{jmclaughl@wcupa.edu}

\author{ NANCY J. WYSHINSKI}
\address{Mathematics Department\\
       Trinity College\\
        300 Summit Street, Hartford, CT 06106-3100}
\email{nancy.wyshinski@trincoll.edu}

\begin{abstract}
We use the method of generating functions to find the limit of a
$q$-continued fraction, with 4 parameters, as a ratio of certain
$q$-series.

We then use this result to give new proofs of several known
continued fraction identities, including  Ramanujan's continued
fraction expansions for $(q^2;q^3)_{\infty}/(q;q^3)_{\infty}$ and
$(q;q^{2})_{\infty}/(q^{3};q^{6})_{\infty}^{3}$. In addition, we
give a new proof of the famous Rogers-Ramanujan identities.

We also use our main result to derive two generalizations of
another continued fraction due to Ramanujan.
\end{abstract}

\keywords{Continued Fractions; $q$-continued fraction; Ramanujan;
$q$-Series.}

\subjclass[2000]{Primary:  33D15. Secondary: 11A55, 11B65,
30B70}

\thanks{The research of the first author was partially supported by National Science Foundation grant
DMS-0300126.}

\maketitle

\section{Introduction}
The work in the present paper was initially motivated by claims of
Ramanujan about two related continued fractions.

The first claim concerns a curious continued fraction with three
limits. To describe Ramanujan's claim, found his lost notebook
(\cite{S88}, p. 45), we first need some notation.
 Throughout take $q\in\mathbb{C}$
with $|q|<1$. The following standard notation for $q$-products
will also be employed:
\begin{align*}
&(a)_{0}:=(a;q)_{0}:=1,& &
(a)_{n}:=(a;q)_{n}:=\prod_{k=0}^{n-1}(1-a\,q^{k}),& & \text{ if }
n \geq 1,&
\end{align*}
and
\begin{align*}
&(a;q)_{\infty}:=\prod_{k=0}^{\infty}(1-a\,q^{k}),& & |q|<1.&
\end{align*}
Set $\omega = e^{2 \pi i/3}$. Ramanujan's claim was that, for
$|q|<1$, {\allowdisplaybreaks \small{
\begin{equation}\label{3lim1}
\lim_{n \to \infty}
 \left (
\frac{1}{1}
 \-
\frac{1}{1+q} \- \frac{1}{1+q^2} \- \cds \- \frac{1}{1+q^n +a}
\right ) = -\omega^{2} \left ( \frac{\Omega - \omega^{n+1}}{\Omega
- \omega^{n-1}} \right ).
\frac{(q^{2};q^{3})_{\infty}}{(q;q^{3})_{\infty}},
\end{equation}
} } where {\allowdisplaybreaks
\begin{equation*}
\Omega :=\frac{1-a\omega^{2}}{1-a \omega}
\frac{(\omega^{2}q;q)_{\infty}}{(\omega q;q)_{\infty}}.
\end{equation*}
} Ramanujan's notation is confusing, but what his claim means is
that the limit exists as $n \to \infty$ in each of the three
congruence classes modulo 3, and that the limit is given by the
expression on the right side of (\ref{3lim1}). Ramanujan's claim
is proved in a recent paper \cite{ABSYZ02}.

In \cite{BML05} the first and second authors of the present paper
generalized Ramanujan's result to produce $q$-continued fractions
with $m$ limits, where $m \geq 3$ is an arbitrary integer. Let
$\omega$ be a primitive $m$-th root of unity and, for ease of
notation,
  let $\bar{ \omega} = 1/\omega$.  Define
{\allowdisplaybreaks
\begin{equation}\label{rammlim}
G(q):= \frac{1}{1} \- \frac{1}{\omega + \bar{ \omega}+q} \-
\frac{1}{\omega + \bar{ \omega}+q^2} \- \frac{1}{\omega + \bar{
\omega}+q^3} \+ \cds.
\end{equation}
} For $|q|<1$ and $a,\,x \not = 0$, define
\begin{equation}\label{Peq}
P(a,x,q):= \sum_{j=0}^{\infty}\frac{q^{j(j+1)/2}a^{j}x^{j}}
{(q;q)_{j}(x^2q;q)_{j}}.
\end{equation}
In \cite{BML05}  the following theorem is proved.
\begin{theorem}\label{ramgentheor}
Let $\omega$ be a primitive $m$-th root of unity and
  let $\bar{ \omega} = 1/\omega$.
Let $1\leq i \leq m$. Then {\allowdisplaybreaks
\begin{multline}\label{ramgentheoreq}
\lim_{k \to \infty} \frac{1}{\omega + \bar{ \omega}+q} \-
\frac{1}{\omega + \bar{ \omega}+q^2} \- \cds \-
\frac{1}{\omega + \bar{ \omega}+q^{mk+i}}\\
= \frac{\omega^{1-i}P(q,\omega,q)-\omega^{i-1}P(q,w^{-1},q)}
{\omega^{-i}P(1,\omega,q)-\omega^{i}P(1,w^{-1},q)}.
\end{multline}
}
\end{theorem}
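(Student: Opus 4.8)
The plan is to prove the theorem by the method of generating functions, which the abstract advertises as the paper's main tool. The key is to convert the continued fraction into a recurrence for its numerators and denominators, then identify the generating functions of those sequences with the $q$-series $P(a,x,q)$ defined in \eqref{Peq}.

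\medskip

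First I would set up the standard theory of the continued fraction. Writing $b = \omega + \bar\omega$, the tail in \eqref{ramgentheoreq} is built from the recurrence $Y_n = (b + q^n) Y_{n-1} - Y_{n-2}$, where $Y_n$ runs through the numerators $A_n$ and denominators $B_n$ (the minus sign in the partial quotients produces the $-Y_{n-2}$ term, and the partial numerators are all $-1$). The value of the continued fraction is then $\lim_{n\to\infty} A_n/B_n$, so the whole problem reduces to finding the asymptotic behavior of the solutions $Y_n$ of this three-term recurrence along each residue class $n \equiv i \pmod m$. The natural device is to form a generating function $F(x) = \sum_{n\ge 0} Y_n x^n$ (or a suitably normalized version); the recurrence translates into a $q$-difference equation for $F$, and solving it should express $F$ in terms of the series $P(a,x,q)$. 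I would expect the two linearly independent solutions of the recurrence to correspond to the two evaluations $P(q,\omega,q)$ and $P(q,\omega^{-1},q)$ appearing in the numerator, and similarly $P(1,\omega,q)$, $P(1,\omega^{-1},q)$ in the denominator; the shift from $a=q$ to $a=1$ distinguishes numerators from denominators via the different initial conditions.

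\medskip

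Next I would pin down the asymptotics. Because $|q|<1$, the recurrence $Y_n = (b+q^n)Y_{n-1} - Y_{n-2}$ is, for large $n$, a perturbation of the constant-coefficient recurrence $Y_n = b\,Y_{n-1} - Y_{n-2}$, whose characteristic roots are $\omega$ and $\bar\omega$ (since $b = \omega + \omega^{-1}$ and $\omega\cdot\omega^{-1}=1$). These roots lie on the unit circle and are distinct, so there is no dominant solution in the usual sense; instead the solutions oscillate with the $m$-th roots of unity, which is exactly why the limit depends on the residue class $i \bmod m$ and why $\omega^{1-i}$, $\omega^{i-1}$, $\omega^{-i}$, $\omega^{i}$ appear as coefficients. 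I would extract the precise constants by matching the generating-function solution against the two pure-oscillation modes $\omega^n$ and $\bar\omega^n$, reading off the coefficient of each mode as a value of $P$. Taking the ratio $A_{mk+i}/B_{mk+i}$ and letting $k\to\infty$ then collapses to the stated quotient.

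\medskip

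The main obstacle I anticipate is justifying the interchange of limits and series and, more delicately, proving that the continued fraction actually converges along each congruence class to the predicted ratio rather than merely that the generating functions formally satisfy the right equation. Since the characteristic roots sit on the unit circle, convergence is not automatic from a Pincherle-type dominance argument, so I would need genuine estimates—presumably showing that the $q^n$ perturbations are summable and controlling the error terms uniformly in $k$—to guarantee that the oscillatory modes separate cleanly in the limit. Establishing this uniform control, and thereby upgrading the formal identity of generating functions to an honest evaluation of $\lim_{k\to\infty} A_{mk+i}/B_{mk+i}$, is where the real work lies.
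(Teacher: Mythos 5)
This theorem is not proved in the paper at all: the text introduces it with ``In \cite{BML05} the following theorem is proved,'' so it is imported from the Bowman--Mc~Laughlin paper on continued fractions with multiple limits, and there is no internal proof to compare your attempt against. Judged on its own merits, your outline points in a sensible direction --- the three-term recurrence $Y_n=(\omega+\bar\omega+q^n)Y_{n-1}-Y_{n-2}$, explicit solutions built from the series $P(a,x,q)$, and asymptotic separation of the two oscillatory modes $\omega^{n}$ and $\omega^{-n}$ along residue classes modulo $m$ is indeed the skeleton of how such results are established (both in \cite{BML05} and in the Ismail--Stanton paper \cite{IS05}).

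However, what you have written is a plan, not a proof, and the gaps are exactly at the load-bearing points. First, you never actually solve the recurrence: the claim that the generating function ``should'' express $F$ in terms of $P(a,x,q)$ has to be replaced by an explicit verification that specific sequences such as $x^{-n}P(q^{n+1},x,q)$ (for suitable normalization) satisfy the recurrence, and by matching the initial conditions of the numerator and denominator convergents to pin down why $a=q$ appears upstairs and $a=1$ downstairs, with the precise coefficients $\omega^{1-i}$, $\omega^{i-1}$, $\omega^{-i}$, $\omega^{i}$; none of this bookkeeping is done, and it is not routine. Second, and more seriously, you concede in your final paragraph that the convergence argument --- controlling the perturbation $q^n$ when the characteristic roots lie on the unit circle, so that no Pincherle/dominant-solution argument applies --- ``is where the real work lies.'' That concession is accurate: since the unperturbed recurrence is of elliptic type, the entire difficulty of the theorem is in proving that $A_{mk+i}$ and $B_{mk+i}$ genuinely converge (after the modes are separated) rather than merely satisfying a formal identity, and a proof that defers precisely this step has not proved the theorem. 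To complete the argument you would need a quantitative perturbation estimate (a discrete Levinson-type theorem, exploiting $\sum_n|q|^n<\infty$) or, as in \cite{BML05}, exact $q$-series solutions of the perturbed recurrence whose asymptotics can be read off directly.
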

The result is stated for the first tail of $G(q)$, rather than
$G(q)$ itself, for aesthetic reasons. Ramanujan's continued
fraction is the special case $m=6$ of \eqref{rammlim}. The result
at \eqref{ramgentheoreq} also appears in the paper of Ismail and
Stanton \cite{IS05}.

If we make the transformation $q\to 1/q$ in Ramanujan's continued
fraction \begin{equation}\label{3limcf}
 T(q):=\frac{1}{1}
 \-
\frac{1}{1+q} \- \frac{1}{1+q^2} \- \cds \- \frac{1}{1+q^n} \-
\cds
\end{equation}
and clear denominators, we get the continued fraction
 \[
S(q):= \frac{1}{1}
 \-
\frac{q}{1+q} \- \frac{q^{3}}{1+q^2} \- \frac{q^{5}}{1+q^3} \-
\cds \- \frac{q^{2n-1}}{1+q^n} \- \cds .
\]
Remarkably, Ramanujan made a deep claim about $S(q)$ also, namely,
that
\begin{equation}\label{ramq2q3}
S(q)=\frac{(q^{2};q^{3})_{\infty}}{(q;q^{3})_{\infty}}.
\end{equation}
This claim is proved in \cite{ABSYZ03}, by the same group of
authors who proved \eqref{3lim1} in \cite{ABSYZ02}. In this paper
the authors remark that ``Of the many continued fractions found by
Ramanujan, \eqref{ramq2q3} is, by far, the most difficult to
prove." The only other proof, to date, can be found \cite{ABJL92}.
Amongst other things, we give a new proof of \eqref{ramq2q3} in
the present paper.

While the first and second authors of this current paper were
working on generalizations of \eqref{3limcf}, computer
investigations indicated to the second and third author that the
numerator and denominator of $S(q)$ converged separately to
$1/(q;q^{3})_{\infty}$ and $1/(q^{2};q^{3})_{\infty}$,
respectively. That this is indeed the case is not obvious in the
proofs in \cite{ABJL92} or \cite{ABSYZ03}. While trying to prove
these separate convergence results, at some point ``the penny
dropped" and we realized that we could adapt the method being used
to generalize \eqref{3limcf} by making the substitution $q \to
1/q$ at the right point and obtain the limit (as quotients of
certain $q$-series) of a quite general class of $q$-continued
fractions, a class which includes $S(q)$ above. Ramanujan's result
\eqref{ramq2q3} then followed after applying a special case of a
powerful basic hypergeometric series transformation due to Watson.

On the way to our main result, we find the limit of  continued
fractions of the form
\[ H(a,b,c,d,q)= \frac{1}{1} \+ \frac{-ab+cq}{a+b+dq}
\+\frac{-ab+cq^2}{a+b+dq^2} \+ \cds \+ \frac{-ab+cq^n}{a+b+dq^n}\+
\cds .
\]
Several of the $q$-continued fractions in the literature arise as
special cases of this continued fraction. However, we do not
investigate this here as a closely related continued fraction,
namely,
\[  1+a +d+ \frac{-a+cq}{a+1+dq}
\+\frac{-a+cq^2}{a+1+dq^2} \+ \cds \+ \frac{-a+cq^n}{a+1+dq^n}\+
\cds,
\]
was investigated by Hirschhorn in \cite{H74} and \cite{HT80}, and
many of the well-known continued fraction identities were derived
by him as corollaries of his main result.

Our result for the continued fraction $H(a,b,c,d,q)$ could be
derived from Hirschhorn's through various substitutions,
equivalence transformations and series manipulations. However, it
is perhaps just as simple to derive it directly, using the
recurrence relations for the numerators and denominators and
generating functions. More importantly, the derivation of our main
result relies on finding expressions for the $N$-th numerator and
denominator of $H(a,b,c,d,q)$ and then applying the transformation
$q \to 1/q$.

Our main result is as follows (see Theorem \ref{1/qth} for a more
complete statement): Let $a$, $b$, $c$, $d$ be complex numbers
with $d \not = 0$ and $|q|<1$. Define \[ H_{1}(a,b,c,d,q):=
\frac{1}{1} \+ \frac{-abq+c}{(a+b)q+d}
 \+ \cds \+ \frac{-ab
q^{2n+1}+cq^n}{(a+b) q^{n+1}+d}\+ \cds .
\]
Then $H_{1}(a,b,c,d,q)$ converges and
\begin{equation*}
\frac{1}{H_{1}(a,b,c,d,q)}-1= \frac{c-abq}{d+aq} \frac{\sum_{j
=0}^{\infty}\displaystyle{\frac{(b/d)^{j}(-c/bd)_{j}\,q^{j(j+3)/2}}{(q)_{j}(-aq^2/d)_{j}}
}} {\sum_{j
=0}^{\infty}\displaystyle{\frac{(b/d)^{j}(-c/bd)_{j}\,q^{j(j+1)/2}}{(q)_{j}(-aq/d)_{j}}}
}.
\end{equation*}
We then use this result, in combination with other well know
transformations like the Jacobi triple product identity and the
aforementioned transformation of Watson, to deduce various
corollaries, some of which are originally due to Ramanujan.
\begin{equation}
\frac{1}{1}
 \-
\frac{q}{1+q} \- \frac{q^{3}}{1+q^2} \- \frac{q^{5}}{1+q^3} \-
\cds \- \frac{q^{2n-1}}{1+q^n} \- \cds
=\frac{(q^{2};q^{3})_{\infty}}{(q;q^{3})_{\infty}}.
\end{equation}
\begin{multline}
1+ \frac{aq}{1} \+ \frac{bq+e}{1} \+ \frac{aq^2}{1} \+
\frac{bq^2+e}{1} \+ \frac{aq^3}{1} \+ \frac{bq^3+e}{1} \+ \cds \\
= \frac{\displaystyle{ \sum_{j
=0}^{\infty}\frac{\left(a/(e+1)\right)^{j}\left(eq/(e+1)\right)_{j}\,q^{j(j+1)/2}}{(q)_{j}\left
(-bq/(e+1)\right )_{j}}}}{\displaystyle{\sum_{j
=0}^{\infty}\frac{\left(aq/(e+1)\right)^{j}\left(e/(e+1)\right)_{j}\,q^{j(j+1)/2}}{(q)_{j}\left
(-bq/(e+1) \right )_{j}} }}.
\end{multline}
\begin{multline}
1+ \frac{aq+e}{1} \+ \frac{bq}{1} \+ \frac{aq^2+e}{1} \+
\frac{bq^2}{1} \+ \frac{aq^3+e}{1} \+ \frac{bq^3}{1} \+ \cds \\
= \frac{\displaystyle{(e+1) \sum_{j
=0}^{\infty}\frac{\left(a/(e+1)\right)^{j}\left(eb/(a(e+1))\right)_{j}\,q^{j(j+1)/2}}{(q)_{j}\left
(-bq/(e+1)\right )_{j}}}}{\displaystyle{\sum_{j
=0}^{\infty}\frac{\left(aq/(e+1)\right)^{j}\left(eb/(a(e+1))\right)_{j}\,q^{j(j+1)/2}}{(q)_{j}\left
(-bq/(e+1) \right )_{j}} }}.
\end{multline}
Remark: each of the previous two continued fractions generalizes a
continued fraction of Ramanujan (see \eqref{phicf}).
\begin{equation}
(-aq)_{\infty} \sum_{j
=0}^{\infty}\frac{(bq)^{j}(-c/b)_{j}\,q^{j(j-1)/2}}{(q)_{j}(-aq)_{j}}
=(-bq)_{\infty} \sum_{j
=0}^{\infty}\frac{(aq)^{j}(-c/a)_{j}\,q^{j(j-1)/2}}{(q)_{j}(-bq)_{j}}.
\end{equation}
\begin{align}
 \frac{1}{1} \+
 \frac{q+q^{2}}{1}
\+
 \frac{q^{2}+q^{4}}{1}
\+
 \frac{q^{3}+q^{6}}{1}
\+ \cds =\frac{(q;q^{2})_{\infty}}{(q^{3};q^{6})_{\infty}^{3}}.
\end{align}
We also give a proof of the Rogers-Ramanujan identities.
\begin{align}
\sum_{n=0}^{\infty}\frac{q^{n^{2}}}{(q;q)_{n}}&=\frac{1}{(q;q^{5})_{\infty}(q^{4}
;q^{5})_{\infty}}
,\\
&\phantom{as} \notag \\
\sum_{n=0}^{\infty}\frac{q^{n^{2}+n}}{(q;q)_{n}}
&=\frac{1}{(q^{2};q^{5})_{\infty}(q^{3};q^{5})_{\infty}}. \notag
\end{align}
Remarks: (1) We could, without loss of generality in
$H_{1}(a,b,c,d,q)$, set one of the parameters $a$, $b$, $c$ or $d$
equal to one and recover the general case, if desired,   by a
change of variables. However, it is better for our present
purposes to retain the flexibility of having 4 parameters and not
having deal with these  changes of variables (see Corollaries
\ref{cram} and \ref{cphi}, for example, where having this full
flexibility allowed us to derive our results more easily).

(2) Shortly after we had proved our main result for
$H_{1}(a,b,c,d,q)$ and obtained the various corollaries, the
second author was browsing an early draft version of \cite{AB05},
which one of the authors of \cite{AB05} had given him. The purpose
was to find further applications of our main result about
$H_{1}(a,b,c,d,q)$ and possibly give new proofs, or possibly
generalizations (see Corollary \ref{cphi}), of some of Ramanujan's
results. Instead he was surprised to find that Ramanujan had
stated a result that was quite close to our main result for
$H_{1}(a,b,c,d,q)$:

For any complex numbers $a$, $b$, $\lambda$ and $q$, with $|q|<1$,
define
\[
G(a,\lambda; b;q)=\sum_{n=0}^{\infty} \frac{(-\lambda
/a;q)_{n}a^{n}q^{n(n+1)/2}} {(q;q)_{n}(-bq;q)_{n}}.
\]
\textbf{Entry 6.4.4 (p.43)} We have
\begin{multline}\label{ramsim}
\frac{G(a q, \lambda q; b; q)}{G(a , \lambda ; b; q)} =
\frac{1}{1+aq} \+ \frac{\lambda q - a b q^{2}}{1+q(a q +b)} \+
\frac{\lambda q^2 - a b q^{4}}{1+q^2(a q +b)}\\ \+ \cds \+
\frac{\lambda q^n - a b q^{2n}}{1+q^n(a q +b)}\+ \cds .
\end{multline}
It seems clear that our main result concerning $H_{1}(a,b,c,d,q)$
could also be derived from \eqref{ramsim}, after various changes
of variable and $q$-series manipulations, but possibly proving it
the way we did may be more illuminating.

\section{Proofs}

We recall the $q$-binomial theorem (\cite{A76}, pp. 35--36).
\begin{lemma}\label{qbin}
If $\left [
\begin{matrix}
n\\
m
\end{matrix}
\right ] $ denotes the Gaussian polynomial defined by
\[
\left [
\begin{matrix}
n\\
m
\end{matrix}
\right ] := \left [
\begin{matrix}
n\\
m
\end{matrix}
\right ]_{q} :=
\begin{cases}
\displaystyle{
\frac{(q;q)_{n}}{(q;q)_{m}(q;q)_{n-m}}}, &\text{ if } 0 \leq m \leq n,\\
0, &\text{ otherwise },
\end{cases}
\]
then
\begin{align}\label{qbineq}
&(z;q)_{N}= \sum_{j=0}^{N}\left [
\begin{matrix}
N\\
j
\end{matrix}
\right ]
(-1)^{j}z^{j}q^{j(j-1)/2},\\
&\frac{1}{(z;q)_{N}} =\sum_{j=0}^{\infty} \left [
\begin{matrix}
N+j-1\\
j
\end{matrix}
\right ] z^{j}. \notag
\end{align}
\end{lemma}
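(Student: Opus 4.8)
The plan is to establish both identities by induction on $N$, using the two standard Pascal-type recurrences for the Gaussian polynomials together with the elementary recurrence $(z;q)_{N}=(1-zq^{N-1})(z;q)_{N-1}$. First I would record the recurrences
\[
\left[\begin{matrix} N\\ j\end{matrix}\right]
=\left[\begin{matrix} N-1\\ j\end{matrix}\right]
+q^{\,N-j}\left[\begin{matrix} N-1\\ j-1\end{matrix}\right],
\qquad
\left[\begin{matrix} N\\ j\end{matrix}\right]
=\left[\begin{matrix} N-1\\ j-1\end{matrix}\right]
+q^{\,j}\left[\begin{matrix} N-1\\ j\end{matrix}\right],
\]
each of which follows at once from the defining quotient of $q$-factorials after clearing denominators. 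These are the only combinatorial inputs needed.

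For the first identity, let $P_{N}(z)$ denote its right-hand side; the base case $N=0$ is immediate. For the inductive step I would insert the inductive hypothesis for $(z;q)_{N-1}$ into $(z;q)_{N}=(1-zq^{N-1})(z;q)_{N-1}$, expand, and shift the index by one in the sum produced by the factor $-zq^{N-1}$. The point of the calculation is the exponent bookkeeping: since $(j-1)(j-2)/2+(N-1)=j(j-1)/2+(N-j)$, the shifted sum carries precisely the factor $q^{N-j}$ required to merge with the unshifted sum through the first Pascal recurrence, reproducing $P_{N}(z)$.

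For the second identity, let $Q_{N}(z)$ denote its right-hand side, regarded as a formal power series in $z$. Rather than expanding a product, I would show that $Q_{N}$ obeys the same first-order recurrence in $N$ as $1/(z;q)_{N}$. Applying the first Pascal recurrence to $\left[\begin{matrix} N+j-1\\ j\end{matrix}\right]$, then multiplying by $z^{j}$ and summing over $j\ge 0$, the $\left[\begin{matrix} N+j-2\\ j\end{matrix}\right]$ term reassembles into $Q_{N-1}(z)$ and the $q^{N-1}\left[\begin{matrix} N+j-2\\ j-1\end{matrix}\right]$ term, after the index shift, into $q^{N-1}z\,Q_{N}(z)$. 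This yields $(1-zq^{N-1})Q_{N}(z)=Q_{N-1}(z)$, which is exactly the relation satisfied by $1/(z;q)_{N}$; together with the base case $N=1$, where both sides equal $\sum_{j\ge 0}z^{j}=1/(1-z)$, the induction closes.

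I do not anticipate any real obstacle, since the statement is classical (due to Gauss); the only places demanding attention are the exponent arithmetic in the first part and keeping the direction of the index shifts consistent in the second. A uniform alternative, if preferred, is to extract a single coefficient recurrence from the functional equation $(1-z)(zq;q)_{N}=(1-zq^{N})(z;q)_{N}$ and solve it, which recovers the Gaussian-polynomial coefficients of the first identity directly; the second identity then follows from the analogous computation for the reciprocal series $Q_{N}$, which satisfies $(1-z)Q_{N}(z)=(1-zq^{N})Q_{N}(zq)$.
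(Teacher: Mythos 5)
Your proof is correct, but there is nothing in the paper to compare it against: the paper does not prove this lemma at all, recalling it as the classical $q$-binomial theorem with a citation to Andrews' \emph{The Theory of Partitions} (pp.~35--36). Your argument is the standard self-contained derivation, and the details check out: the two $q$-Pascal recurrences do follow from the quotient definition after clearing denominators; the exponent identity $(j-1)(j-2)/2+(N-1)=j(j-1)/2+(N-j)$ is exactly what makes the shifted sum merge with the unshifted one via the first recurrence, giving the inductive step for $(z;q)_N$; and in the second part, applying that same recurrence with top entry $N+j-1$ yields $(1-zq^{N-1})Q_{N}(z)=Q_{N-1}(z)$, which is precisely the relation satisfied by $1/(z;q)_{N}$, so the induction closes from the base case $N=1$ as an identity of formal power series in $z$ (and hence for $|z|<1$, where everything converges). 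Your choice of $N=1$ rather than $N=0$ as the base case is the right one, since the vanishing convention for Gaussian polynomials with negative top entry makes the $N=0$ instance of the second identity degenerate. In short: what your approach buys is a complete elementary proof from first principles; what the paper buys by citing instead is brevity, the result being classical (Gauss, Cauchy) and peripheral to the paper's actual contribution.
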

Note for later use that
\[
\left [
\begin{matrix}
n\\
m
\end{matrix}
\right ]_{1/q} =q^{m(m-n)}\left [
\begin{matrix}
n\\
m
\end{matrix}
\right ]_{q}.
\]
\begin{theorem}\label{qth}
Let \[ H(a,b,c,d,q)= \frac{1}{1} \+ \frac{-ab+cq}{a+b+dq}
\+\frac{-ab+cq^2}{a+b+dq^2} \+ \cds \+ \frac{-ab+cq^n}{a+b+dq^n}\+
\cds .
\]
(i) Let $A_{N}:=A_{N}(q)$ and $B_{N}:=B_{N}(q)$ denote the $N$-th
numerator convergent and $N$-th denominator convergent,
respectively, of $H(a,b,c,d,q)$. Then $A_{N}$ and $B_{N}$ are
given explicitly by the formulae
\begin{multline}\label{ANeq}
A_{N}= \\\sum_{j,l,n\geq 0}a^{j}b^{N-1-n-j-l}c^{l}
d^{n-l}q^{n(n+1)/2+l(l+1)/2} \left [
\begin{matrix}
n+j\\
j
\end{matrix}
\right ] \left [
\begin{matrix}
N-1-j-l\\
n
\end{matrix}
\right ] \left [
\begin{matrix}
n\\
l
\end{matrix}
\right ].
\end{multline}
For $N\geq 2$,
\begin{multline}\label{BNeq}
 B_{N}= A_{N}+ (c q-a b) \times \\
\sum_{j,l,n\geq 0}a^{j}b^{N-2-n-j-l}c^{l}
d^{n-l}q^{n(n+3)/2+l(l+1)/2} \left [
\begin{matrix}
n+j\\
j
\end{matrix}
\right ] \left [
\begin{matrix}
N-2-j-l\\
n
\end{matrix}
\right ] \left [
\begin{matrix}
n\\
l
\end{matrix}
\right ].
\end{multline}
(ii) If $|a/b| <1$ and $|q|<1$ then $H(a,b,c,d,q)$ converges and
\begin{equation}\label{Hlim}
\frac{1}{H(a,b,c,d,q)}-1= \frac{\displaystyle{ (c q/b
-a)\sum_{n=0}^{\infty} \frac{(d/b)^{n}q^{n(n+3)/2}(-cq/db)_{n}}
{(a/b)_{n+1}(q)_{n}}}}{\displaystyle{\sum_{n=0}^{\infty}
\frac{(d/b)^{n}q^{n(n+1)/2}(-cq/d b)_{n}} {(a/b)_{n+1}(q)_{n}} }}.
\end{equation}
(iii) If $|q|<1$, $|a|<1$ and $b=1$, then the numerators and
denominators converge separately and
\begin{equation}\label{ANlim}
\lim_{N \to \infty} A_{N} = \sum_{n=0}^{\infty}
\frac{d^{n}q^{n(n+1)/2}(-cq/d)_{n}} {(a)_{n+1}(q)_{n}},
\end{equation}
\begin{equation}\label{BNlim}
\lim_{N \to \infty} B_{N} = \sum_{n=0}^{\infty}
\frac{d^{n}q^{n(n+1)/2}(-cq/d)_{n}} {(a)_{n+1}(q)_{n}} + (c q
-a)\sum_{n=0}^{\infty} \frac{d^{n}q^{n(n+3)/2}(-cq/d)_{n}}
{(a)_{n+1}(q)_{n}}.
\end{equation}
\end{theorem}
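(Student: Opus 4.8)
The plan is to establish the closed forms in part (i) first, since parts (ii) and (iii) then follow as termwise limits of \eqref{ANeq} and \eqref{BNeq}. Reading off the partial numerators and denominators, for $n\ge2$ the $n$th partial numerator is $cq^{n-1}-ab$ and the $n$th partial denominator is $a+b+dq^{n-1}$, so the numerator convergents satisfy
\[
A_N-(a+b)A_{N-1}+ab\,A_{N-2}=q^{N-1}\bigl(d\,A_{N-1}+c\,A_{N-2}\bigr),\qquad N\ge2,
\]
with $A_0=0$, $A_1=1$, and $B_N$ obeying the same recurrence with $B_0=B_1=1$ (so that $B_2=A_2+cq-ab$). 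Everything reduces to solving this $q$-difference recurrence.

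For $A_N$ I would use the generating-function method advertised in the introduction. Setting $\mathcal A(z)=\sum_{N\ge1}A_Nz^N$, multiplying the recurrence by $z^N$ and summing over $N\ge2$, the factor $q^{N-1}$ turns the right-hand side into an evaluation at $qz$, yielding
\[
\bigl(1-(a+b)z+ab\,z^2\bigr)\mathcal A(z)=z+(dz+cq\,z^2)\,\mathcal A(qz).
\]
Iterating $z\mapsto qz$ expresses $\mathcal A(z)$ as an explicit infinite sum, each term a product of factors $dq^{k}z+cq^{2k+1}z^2$ divided by quadratics $(1-aq^{k}z)(1-bq^{k}z)$. Extracting the coefficient of $z^N$ is where the three Gaussian polynomials of \eqref{ANeq} appear: expanding each $1/\bigl((1-aq^{k}z)(1-bq^{k}z)\bigr)$ in powers of $z$ produces the $a$- and $b$-weights together with $\left[\begin{matrix}n+j\\j\end{matrix}\right]$ and $\left[\begin{matrix}N-1-j-l\\n\end{matrix}\right]$, while each factor $dq^{k}z+cq^{2k+1}z^2$ contributes either $d$ (its linear part) or $cq$ (its quadratic part, taken $l$ times out of $n$), giving $c^ld^{n-l}$, the Gaussian polynomial $\left[\begin{matrix}n\\l\end{matrix}\right]$, and the $q$-exponent $n(n+1)/2+l(l+1)/2$. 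Alternatively one can verify \eqref{ANeq} directly by induction on $N$, matching the recurrence through the two $q$-Pascal rules for Gaussian polynomials. For \eqref{BNeq} I would avoid a second computation: $B_N-A_N$ is the $(N-1)$st numerator convergent of the tail obtained by deleting the initial $1/1$, and pulling the common factor $cq-ab$ out of its first partial numerator leaves a continued fraction of the same shape as $H$ with every power of $q$ raised by one, so \eqref{ANeq} applied to it (sending $q^{n(n+1)/2}$ to $q^{n(n+3)/2}$) gives \eqref{BNeq}.

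Given (i), parts (ii) and (iii) are limits of these triple sums. For (ii) I would normalise by $b^{N-1}$ and let $N\to\infty$. Since $\left[\begin{matrix}M\\n\end{matrix}\right]\to1/(q)_n$ as $M\to\infty$ when $|q|<1$, the factor $\left[\begin{matrix}N-1-j-l\\n\end{matrix}\right]$ tends to $1/(q)_n$; the inner sum $\sum_j(a/b)^j\left[\begin{matrix}n+j\\j\end{matrix}\right]$ equals $1/(a/b;q)_{n+1}$ by the second identity of Lemma \ref{qbin} (here $|a/b|<1$ is used), and the sum over $l$ collapses to $(-cq/db;q)_n$ by the first identity. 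This reproduces exactly the denominator of \eqref{Hlim}, while the $B_N-A_N$ side gives its numerator, the prefactor being $(cq-ab)/b=cq/b-a$. The interchange of limit and summation is justified by dominated convergence, the Gaussian polynomials increasing monotonically in $M$ to their limits and hence being uniformly bounded. Setting $b=1$ and assuming $|a|<1$, the same computation gives (iii); now no normalisation is needed, $A_N$ and $B_N$ converge individually, and one obtains \eqref{ANlim} and \eqref{BNlim}. Convergence of $H$ in (ii) then follows because the normalised numerator and denominator converge with nonzero denominator limit.

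The main obstacle is part (i): arranging the iteration of the functional equation—or the inductive step—so that the three summation indices, the split between the $d$- and $cq$-contributions, and the accumulated powers of $q$ assemble precisely into the product of Gaussian polynomials in \eqref{ANeq}. Once \eqref{ANeq} and \eqref{BNeq} are established, the rest is a routine double application of the $q$-binomial theorem.
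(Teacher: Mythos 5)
Your proposal is correct, and for the bulk of the theorem it is the paper's own argument: part (i) is done by exactly the Hirschhorn generating-function method, with the same functional equation $\bigl(1-(a+b)t+ab\,t^{2}\bigr)F(t)=t+(dt+cq\,t^{2})F(qt)$, iterated to
\begin{equation*}
F(t)=\sum_{n\geq 1}\frac{t^{n}d^{n-1}(-ctq/d)_{n-1}q^{n(n-1)/2}}{(at)_{n}(bt)_{n}}
\end{equation*}
and expanded by Lemma \ref{qbin} (your "choose the linear or quadratic part of each factor" description is precisely the $q$-binomial expansion of $(-ctq/d)_{n-1}$), and parts (ii), (iii) are obtained, as in the paper, by normalising by $b^{N-1}$, letting the $N$-dependent Gaussian polynomial tend to $1/(q)_{n}$, and collapsing the $j$- and $l$-sums through the two identities of Lemma \ref{qbin}, with $|a/b|<1$ entering via $\sum_{j}\left[\begin{matrix}n+j\\ j\end{matrix}\right](a/b)^{j}=1/(a/b)_{n+1}$. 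The one genuine divergence is your derivation of \eqref{BNeq}: the paper runs the generating-function computation a second time for $G(t)=\sum_{N}B_{N}t^{N}$, whereas you note that $E_{N}:=B_{N}-A_{N}$ satisfies the same three-term recurrence with initial values $E_{1}=0$, $E_{2}=cq-ab$, so that $E_{N}=(cq-ab)A'_{N-1}$, where $A'_{M}$ are the numerator convergents of $H(a,b,cq,dq,q)$ (numerator convergents are independent of the first partial denominator and homogeneous in the first partial numerator); applying \eqref{ANeq} with $c\mapsto cq$, $d\mapsto dq$, $N\mapsto N-1$ turns $q^{n(n+1)/2}$ into $q^{n(n+3)/2}$ and yields \eqref{BNeq}. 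This shortcut is valid and buys a cleaner proof — one series computation instead of two — at the cost of the (easily checked) tail identification; the paper's route is more mechanical but keeps numerators and denominators on an identical footing. One shared soft spot: like the paper, you assert rather than verify that the limit of $A_{N}/b^{N-1}$ is nonzero, which is what passing from the separate limits to \eqref{Hlim} and to convergence of $H$ actually requires.
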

Remarks: (a) By symmetry the conditions on $a$ and $b$ in (ii) and
(iii)  can be interchanged, in which case $a$ and $b$ are
interchanged on the right
sides.\\
(b) The left side in (ii) is displayed as shown to simplify the
representation on the right side.
\begin{proof}
(i) We follow the method of Hirschhorn in \cite{H74}. We suppose
initially that $|q|<1$ and for ease of notation, let
$A_{N}:=A_{N}(q)$ and $B_{N}:=B_{N}(q)$ and set $F(t) = \sum_{N
\geq 1}A_{N}t^{n}$ and $G(t) = \sum_{N \geq 1}B_{N}t^{n}$. We
suppose initially that $a$ and $b$ are chosen so that the series
defining $F(t)$ and $G(t)$ converge ($|a|=|b|=1$ suffices for
this). From the recurrence relations for the convergents of a
continued fraction, we have that
\begin{equation*}
A_{N+1}=(a+b+d q^{N})A_{N}+(-a b + c q^{N})A_{N-1}
\end{equation*}
holds for $N\geq 1$. If this equation is multiplied by $t^{N+1}$
and summed over $N\geq 1$, we get
\begin{equation*}
F(t)-t=t(a+b)F(t)+t d F(t q)-a b t^2 F(t) + c t^2 q F(t q).
\end{equation*}
Here we have used $A_{1}=1$ and $A_{0}=0$. The last equation can
be rewritten to give
\begin{equation*} F(t) = \frac{t}{(1-a t)(1 -
b t)} +\frac{t(d + c t q)}{(1-a t)(1 - b t)}F(t q).
\end{equation*}
Next, this equation is iterated and we use the fact that $F(0) =0$
to get that
\begin{equation*}
F(t) = \sum_{n\geq
1}\frac{t^{n}d^{n-1}(-ctq/d)_{n-1}q^{n(n-1)/2}}{(at)_{n}(bt)_{n}}.
\end{equation*}
The $q$-binomial theorem (Lemma \ref{qbin}) is applied to the
$q$-products in the expression above to give that
\begin{multline*}
F(t) =\\
 \sum_{\stackrel{n\geq 1,}{j,k,l\geq 0}}
t^{n+j+k+l}d^{n-1-l}a^{j}b^{k}c^{l}q^{\frac{n(n-1)}{2}+\frac{l(l+1)}{2}}
\left [
\begin{matrix}
n+j-1\\
j
\end{matrix}
\right ] \left [
\begin{matrix}
n+k-1\\
k
\end{matrix}
\right ] \left [
\begin{matrix}
n-1\\
l
\end{matrix}
\right ]\\
=\sum_{j,k,l,n\geq 0}
t^{n+1+j+k+l}d^{n-l}a^{j}b^{k}c^{l}q^{\frac{n(n+1)}{2}+\frac{l(l+1)}{2}}
\left [
\begin{matrix}
n+j\\
j
\end{matrix}
\right ] \left [
\begin{matrix}
n+k\\
n
\end{matrix}
\right ] \left [
\begin{matrix}
n\\
l
\end{matrix}
\right ].
\end{multline*}
Finally, we let $N=n+j+k+l+1$,  substitute for $k$
 and use the definition of $F(t)$ to get
\eqref{ANeq}.

By similar reasoning we get that {\allowdisplaybreaks
\begin{align*}
G(t) &= \sum_{n\geq
1}\frac{t^{n}d^{n-1}(-ctq/d)_{n-1}q^{n(n-1)/2}}{(at)_{n}(bt)_{n}}(1+(cq-ab)tq^{n-1})\\
&=F(t)+(cq-ab)\sum_{n\geq
1}\frac{t^{n+1}d^{n-1}(-ctq/d)_{n-1}q^{(n-1)(n+2)/2}}{(at)_{n}(bt)_{n}}\\
&=F(t)+(cq-ab)\sum_{n\geq
0}\frac{t^{n+2}d^{n}(-ctq/d)_{n}q^{n(n+3)/2}}{(at)_{n+1}(bt)_{n+1}}\\
&=F(t)+(cq-ab)\sum_{j,k,l,n\geq 0}
t^{n+2+j+k+l}d^{n-l}a^{j}b^{k}c^{l}q^{\frac{n(n+3)}{2}+\frac{l(l+1)}{2}}\\
&\phantom{ASASASASASfffsdfsdfsdASASAS} \times \left [
\begin{matrix}
n+j\\
j
\end{matrix}
\right ] \left [
\begin{matrix}
n+k\\
n
\end{matrix}
\right ] \left [
\begin{matrix}
n\\
l
\end{matrix}
\right ].
\end{align*}
} We let $N=n+j+k+l+2$,  substitute for $k$
 and use the definition of $G(t)$ to get
\eqref{BNeq}.


(ii) The expression for $A_{N}$ in \eqref{ANeq} can be re-written
as {\allowdisplaybreaks
\begin{align}\label{ANnew}
A_{N}&=b^{N-1}\sum_{n\geq 0} (d/b)^{n}q^{n(n+1)/2}
 \sum_{j\geq 0}
\left [
\begin{matrix}
n+j\\
j
\end{matrix}
\right ](a/b)^{j}\\
&\phantom{sadasdasdadadssdfdsfgdfga}\times
 \sum_{l\geq 0}
 \left [
\begin{matrix}
N-1-j-l\\
n
\end{matrix}
\right ]
 q^{l(l-1)/2}
 \left (
\frac{cq}{bd}
 \right )^{l}
\left [
\begin{matrix}
n\\
l
\end{matrix}
\right ]. \notag
\end{align}
}

By the definition of the Gaussian polynomials in Lemma \ref{qbin},
$j$, $l$ and $n$ are restricted by $l\leq n$ and $l+j+n\leq N-1$.

Similarly, the expression for $B_{N}$ in \eqref{BNeq} can be
re-written as
\begin{align*}
B_{N}&=A_{N}+b^{N-1}(cq/b-a)\sum_{n\geq 0} (d/b)^{n}q^{n(n+3)/2}
 \sum_{j\geq 0}
\left [
\begin{matrix}
n+j\\
j
\end{matrix}
\right ](a/b)^{j}\\
&\phantom{sadasdasdadadssdfdsfgdfga}\times
 \sum_{l\geq 0}
 \left [
\begin{matrix}
N-2-j-l\\
n
\end{matrix}
\right ]
 q^{l(l-1)/2}
 \left (
\frac{cq}{bd}
 \right )^{l}
\left [
\begin{matrix}
n\\
l
\end{matrix}
\right ].
\end{align*}
Thus
 {\allowdisplaybreaks
\begin{align*}
\lim_{N\to \infty}
\frac{B_{N}-A_{N}}{b^{N-1}}&=(cq/b-a)\sum_{n\geq 0}
(d/b)^{n}q^{n(n+3)/2}
 \sum_{j\geq 0}
\left [
\begin{matrix}
n+j\\
j
\end{matrix}
\right ](a/b)^{j}\\
&\phantom{sadasdasdadadssdfdsfgdfga}\times
 \sum_{l =0}^{n}\frac{
  q^{l(l-1)/2}
 \left (
\frac{cq}{bd}
 \right )^{l}}{(q)_{n}}
\left [
\begin{matrix}
n\\
l
\end{matrix}
\right ]\\
&= (c q/b -a)\sum_{n=0}^{\infty}
\frac{(d/b)^{n}q^{n(n+3)/2}(-cq/db)_{n}} {(a/b)_{n+1}(q)_{n}}.
\end{align*}
} Likewise the expression for $A_{N}$ above gives that
\begin{equation*}
\lim_{N\to \infty} \frac{A_{N}}{b^{N-1}} = \sum_{n=0}^{\infty}
\frac{(d/b)^{n}q^{n(n+1)/2}(-cq/d b)_{n}} {(a/b)_{n+1}(q)_{n}}.
\end{equation*}
Equation \ref{Hlim} is now immediate.

(iii) If we now set $b=1$ the limit above, we get immediately that
\begin{equation*}
\lim_{N \to \infty} A_{N} =\sum_{n\geq 0}\frac{
d^{n}q^{n(n+1)/2}(-cq/d)_{n}} {(a)_{n+1}(q)_{n}}.
\end{equation*}
This proves \eqref{ANlim}. The proof of \eqref{BNlim} is similar.
\end{proof}

\begin{theorem}\label{1/qth}
Let $a$, $b$, $c$, $d$ be complex numbers with $d \not = 0$ and
$|q|<1$. Define \[ H_{1}(a,b,c,d,q):= \frac{1}{1} \+
\frac{-abq+c}{(a+b)q+d}
 \+ \cds \+ \frac{-ab
q^{2n+1}+cq^n}{(a+b) q^{n+1}+d}\+ \cds .
\]
(i) Let $C_{N}:=C_{N}(q)$ and $D_{N}:=D_{N}(q)$ denote the $N$-th
numerator convergent and $N$-th denominator convergent,
respectively, of $H_{1}(a,b,c,d,q)$. Then $C_{N}$ and $D_{N}$ are
given explicitly by the following formulae.
\begin{multline}\label{CNeq}
C_{N}= d^{N-1}\sum_{j,\,l ,\,n\geq 0}a^{j}b^{n-j-l}c^{l}
d^{-n-l}q^{n(n+1)/2+l(l-1)/2} \\
 \times\left [
\begin{matrix}
N-1-n+j\\
j
\end{matrix}
\right ]_{q} \left [
\begin{matrix}
N-1-j-l\\
n-j-l
\end{matrix}
\right ]_{q} \left [
\begin{matrix}
N-1-n\\
l
\end{matrix}
\right ]_{q}.
\end{multline}
For $N\geq 2$,
\begin{multline}\label{DNeq}
 D_{N}=C_{N}+(c/bq-a)\sum_{j,\,l,  \, n \geq 0}a^{j}b^{n+1-j-l}c^{l}
d^{N-2-n-l} \times \\
q^{(n+1)(n+2)/2+l(l-1)/2} \left [
\begin{matrix}
N-2-n+j\\
j
\end{matrix}
\right ]_{q} \left [
\begin{matrix}
N-2-j-l\\
n-j-l
\end{matrix}
\right ]_{q} \left [
\begin{matrix}
N-2-n\\
l
\end{matrix}
\right ]_{q}.
 \end{multline}
(ii) If $|q|<1$ then $H_{1}(a,b,c,d,q)$ converges and
\begin{equation}\label{H1lim}
\frac{1}{H_{1}(a,b,c,d,q)}-1= \frac{c-abq}{(d+aq)q} \frac{\sum_{j
=0}^{\infty}\displaystyle{\frac{(b/d)^{j}(-c/bd)_{j}\,q^{(j+1)(j+2)/2}}{(q)_{j}(-aq^2/d)_{j}}
}} {\sum_{j
=0}^{\infty}\displaystyle{\frac{(b/d)^{j}(-c/bd)_{j}\,q^{j(j+1)/2}}{(q)_{j}(-aq/d)_{j}}}
}.
\end{equation}
(iii) If $|q|<1$ and $d=1$, then the numerators and denominators
converge separately and
\begin{equation}\label{CNlim}
C_{\infty}:=\lim_{N \to \infty} C_{N} =(-aq)_{\infty}
\sum_{j=0}^{\infty}
 \frac{ q^{j(j+1)/2}b^{j}(-c/b)_{j}}{(q)_{j}(-aq)_{j}}.
\end{equation}
\begin{equation}\label{DNlim}
D_{\infty}:=\lim_{N \to \infty} D_{N} =
C_{\infty}+(c/q-ab)(-aq)_{\infty} \sum_{j=0}^{\infty}
 \frac{ q^{(j+1)(j+2)/2}b^{j}(-c/b)_{j}}{(q)_{j}(-aq)_{j+1}}.
\end{equation}
\end{theorem}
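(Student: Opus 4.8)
The plan is to deduce Theorem \ref{1/qth} from Theorem \ref{qth} by the substitution $q\to 1/q$. The structural point is that $H_{1}(a,b,c,d,q)$ is an \emph{equivalence transform} of $H(a,b,c,d,1/q)$: scaling the $n$-th tail denominator $a+b+dq^{-n}$ of $H(a,b,c,d,1/q)$ by $q^{n}$ and its numerator $-ab+cq^{-n}$ by $q^{2n-1}$ produces exactly the partial quotients of $H_{1}(a,b,c,d,q)$. Equivalently, the scaling sequence $r_{m}=q^{m-1}$ (which fixes the leading $\tfrac11$), with partial product $R_{N}=\prod_{m=1}^{N}r_{m}=q^{N(N-1)/2}$, multiplies both convergents by $R_{N}$ and leaves their ratio unchanged. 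Hence $C_{N}=q^{N(N-1)/2}A_{N}(1/q)$ and $D_{N}=q^{N(N-1)/2}B_{N}(1/q)$, and in particular $H_{1}(a,b,c,d,q)=H(a,b,c,d,1/q)$ as values. This identity is the engine for all three parts; the normalization $C_{1}=1$ confirms the exponent $N(N-1)/2$.

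For (i) I would substitute $q\to1/q$ into the closed forms \eqref{ANeq} and \eqref{BNeq}, using the reciprocity $\left[\begin{smallmatrix}n\\ m\end{smallmatrix}\right]_{1/q}=q^{m(m-n)}\left[\begin{smallmatrix}n\\ m\end{smallmatrix}\right]_{q}$ recorded after Lemma \ref{qbin}, and then multiply through by $q^{N(N-1)/2}$. Because \eqref{ANeq} and \eqref{BNeq} are finite identities in the Gaussian polynomials, the substitution is legitimate for every $q\neq 0$, independently of the convergence hypotheses under which they were derived. After collecting the powers of $q$, the real work is to re-index the triple sum, reversing Gaussian polynomials by $\left[\begin{smallmatrix}n\\ m\end{smallmatrix}\right]_{q}=\left[\begin{smallmatrix}n\\ n-m\end{smallmatrix}\right]_{q}$ and shifting the summation variables, so as to move all the $N$-dependence into the prefactor $d^{N-1}$ (resp.\ $d^{N-2}$) together with the three Gaussian polynomials, arriving at \eqref{CNeq} and \eqref{DNeq}. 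Converting this ``$b^{N-1}$-form'' inherited from $H$ into the ``$d^{N-1}$-form'' is not cosmetic: it is precisely the arrangement in which $N\to\infty$ can be taken termwise, and I expect this bookkeeping to be the main obstacle of the proof.

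For (ii) and (iii) I would pass to the limit directly in \eqref{CNeq} and \eqref{DNeq}, now with genuine $|q|<1$. Two inputs suffice: the limit $\left[\begin{smallmatrix}M\\ k\end{smallmatrix}\right]_{q}\to 1/(q)_{k}$ as $M\to\infty$, applied to the three Gaussian polynomials, and the Euler identity $(-z)_{\infty}=\sum_{j\ge0}z^{j}q^{j(j-1)/2}/(q)_{j}$, the $N\to\infty$ case of the first identity in Lemma \ref{qbin}. In case (iii), where $d=1$ removes the prefactor, $C_{N}$ and $D_{N}$ converge termwise to triple sums; writing $n=s+j+l$ and using that $q^{n(n+1)/2}$ then factors, the $a$-index sums by the Euler identity to $(-aq)_{\infty}/(-aq)_{s+l}$, after which combining the remaining two indices into $m=s+l$ and applying the finite $q$-binomial theorem \eqref{qbineq} produces $(-c/b)_{m}$, giving exactly the single sums \eqref{CNlim} and \eqref{DNlim}. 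For (ii) the same two summations with general $d$ show that $C_{N}$ and $D_{N}-C_{N}$ each grow like $d^{N}$ times $(-aq/d)_{\infty}$ times a convergent series; in the ratio $1/H_{1}-1=\lim_{N}(D_{N}-C_{N})/C_{N}$ these common factors cancel, leaving the quotient of series in \eqref{H1lim}, while the splitting $(-aq/d)_{\infty}=(1+aq/d)(-aq^{2}/d)_{\infty}$ accounts for its explicit prefactor $(c-abq)/((d+aq)q)$ and the raised symbol $(-aq^{2}/d)_{j}$.

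One point forces this route and is worth stressing. Although $H_{1}(a,b,c,d,q)=H(a,b,c,d,1/q)$, one cannot obtain \eqref{H1lim} by literally substituting $q\to1/q$ in \eqref{Hlim}: the series there carry $q^{-n(n+1)/2}$ and diverge under that substitution when $|q|<1$. This is exactly why the finite formulas of (i) are indispensable, the limit being taken from them directly. The remaining points are routine: justifying the termwise passage to the limit (dominated convergence, using $|q|<1$ and the Gaussian-polynomial bounds), and observing that convergence of $H_{1}$ now holds for all parameters with $|q|<1$, since the limiting numerator and denominator series converge unconditionally.
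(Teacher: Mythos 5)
Your proposal is correct and takes essentially the same route as the paper's own proof: the paper likewise sets $C_{N}=q^{N(N-1)/2}A_{N}(1/q)$ and $D_{N}=q^{N(N-1)/2}B_{N}(1/q)$ via the similarity (equivalence) transformation, uses the reciprocity $\left[\begin{smallmatrix}n\\ m\end{smallmatrix}\right]_{1/q}=q^{m(m-n)}\left[\begin{smallmatrix}n\\ m\end{smallmatrix}\right]_{q}$ plus re-indexing to get \eqref{CNeq}--\eqref{DNeq}, and then lets $N\to\infty$ termwise in those finite formulas, collapsing the triple sums with exactly your two inputs (the finite $q$-binomial theorem \eqref{qbineq} and Euler's identity) before cancelling $(-aq/d)_{\infty}=(1+aq/d)(-aq^{2}/d)_{\infty}$ in the ratio. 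The only difference is immaterial bookkeeping: you sum the $a$-index first via Euler and then merge the remaining pair of indices to produce $(-c/b)_{m}$, whereas the paper sums out $l$ first to produce $(-c/bd)_{n-j}$ and then recognizes the Euler sum in the shifted $n$-index.
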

\begin{proof}
The continued fraction $H_{1}(a,b,c,d,q)$ is derived from
$H(a,b,c,d,q)$ by making the substitution $q \to 1/q$ and then
applying a sequence of similarity transformations to clear the
negative powers of $q$. Thus, {\allowdisplaybreaks
\begin{align*}
C_{N}&=q^{N(N-1)/2}A_{N}(1/q),\\
&=\sum_{j,l,n\geq 0}a^{j}b^{N-1-n-j-l}c^{l}
d^{n-l}q^{(N-n)(N-n-1)/2+l(l-1)/2} \\
&\phantom{dfasdfasdfdsfsadfas} \times\left [
\begin{matrix}
n+j\\
j
\end{matrix}
\right ]_{q} \left [
\begin{matrix}
N-1-j-l\\
n
\end{matrix}
\right ]_{q} \left [
\begin{matrix}
n\\
l
\end{matrix}
\right ]_{q}\\
&=\sum_{j,l,n\geq 0}a^{j}b^{n-j-l}c^{l}
d^{N-1-n-l}q^{n(n+1)/2+l(l-1)/2} \\
&\phantom{dfasdfasdfdsadfas} \times\left [
\begin{matrix}
N-1-n+j\\
j
\end{matrix}
\right ]_{q} \left [
\begin{matrix}
N-1-j-l\\
n-j-l
\end{matrix}
\right ]_{q} \left [
\begin{matrix}
N-1-n\\
l
\end{matrix}
\right ]_{q}\\
&=d^{N-1}\sum_{n=0}^{N-1}\sum_{j=0}^{n}\sum_{l=0}^{\min(n-j,\,N-1-n)}a^{j}b^{n-j-l}c^{l}
d^{-n-l}q^{n(n+1)/2+l(l-1)/2} \\
&\phantom{dfasdfasdfdsadfas} \times\left [
\begin{matrix}
N-1-n+j\\
j
\end{matrix}
\right ]_{q} \left [
\begin{matrix}
N-1-j-l\\
n-j-l
\end{matrix}
\right ]_{q} \left [
\begin{matrix}
N-1-n\\
l
\end{matrix}
\right ]_{q}.
\end{align*}
} For the next-to-last step we replaced $n$ by $N-1-n$ and in the
last step the upper limits on $j$, $l$ and $n$ come from the
definition of the Gaussian polynomials in Lemma \ref{qbin}. This
proves \eqref{CNeq}.

Similarly, {\allowdisplaybreaks
\begin{align*}
 D_{N}&=q^{N(N-1)/2}B_{N}(1/q)\\
&=C_{N} + (c/bq-a)\sum_{j,l,n\geq 0}a^{j}b^{N-1-n-j-l}c^{l}
d^{n-l}q^{(N-n)(N-n-1)/2+l(l-1)/2} \\
&\phantom{dfasdfasdfdsfsadasdasfas} \times\left [
\begin{matrix}
n+j\\
j
\end{matrix}
\right ]_{q} \left [
\begin{matrix}
N-2-j-l\\
n
\end{matrix}
\right ]_{q} \left [
\begin{matrix}
n\\
l
\end{matrix}
\right ]_{q}\\
&=C_{N}+(c/bq-a)\sum_{j,l,n\geq 0}a^{j}b^{n+1-j-l}c^{l}
d^{N-2-n-l}q^{(n+1)(n+2)/2+l(l-1)/2} \\
&\phantom{dfasdfasdfdsadfas} \times\left [
\begin{matrix}
N-2-n+j\\
j
\end{matrix}
\right ]_{q} \left [
\begin{matrix}
N-2-j-l\\
n-j-l
\end{matrix}
\right ]_{q} \left [
\begin{matrix}
N-2-n\\
l
\end{matrix}
\right ]_{q}\\
&=C_{N}+(c/bq-a)\sum_{n= 0}^{N-2} \sum_{j= 0}^{n} \sum_{l=
0}^{\min ( n-j,\ N-2-n)} a^{j}b^{n+1-j-l}c^{l}
d^{N-2-n-l} \times \\
&q^{(n+1)(n+2)/2+l(l-1)/2} \left [
\begin{matrix}
N-2-n+j\\
j
\end{matrix}
\right ]_{q} \left [
\begin{matrix}
N-2-j-l\\
n-j-l
\end{matrix}
\right ]_{q} \left [
\begin{matrix}
N-2-n\\
l
\end{matrix}
\right ]_{q}.
\end{align*}
} The second equality follows upon replacing $n$ by $N-n-2$ and
the bounds on $j$, $l$ and $n$ in the last equality follow, as
above, from the definition of the Gaussian polynomials in Lemma
\ref{qbin}. This proves \eqref{DNeq}.

From \eqref{DNeq}, {\allowdisplaybreaks
\begin{align*}
\lim_{N \to \infty}& \frac{D_{N}-C_{N}}{d^{N-1}}
=\frac{c/q-ab}{d}\sum_{j,\,l,  \, n \geq 0}a^{j}b^{n-j-l}c^{l}
d^{-n-l}
\frac{q^{(n+1)(n+2)/2+l(l-1)/2}}{(q)_{j}(q)_{n-j-l} (q)_{l}}\\
&=\frac{c/q-ab}{d}\sum_{n\geq 0} (b/d)^{n}q^{(n+1)(n+2)/2}
\sum_{j =0}^{n}\frac{(a/b)^{j}}{(q)_{j}(q)_{n-j}} \\
&\phantom{sdasdasdasdassaadasddassd} \times \sum_{l=0}^{n-j}
q^{l(l-1)/2}(c/bd)^{l}
 \left [
\begin{matrix}
n-j\\
l
\end{matrix}
\right ]\\
&=\frac{c-abq}{dq}\sum_{n\geq 0} (b/d)^{n}q^{(n+1)(n+2)/2}
\sum_{j =0}^{n}\frac{(a/b)^{j}(-c/bd)_{n-j}}{(q)_{j}(q)_{n-j}} \\
&=\frac{c-abq}{dq}\sum_{n\geq 0} (b/d)^{n}q^{(n+1)(n+2)/2}
\sum_{j =0}^{n}\frac{(a/b)^{n-j}(-c/bd)_{j}}{(q)_{j}(q)_{n-j}}  \\
&=\frac{c-abq}{dq}\sum_{j
=0}^{\infty}\frac{(b/a)^{j}(-c/bd)_{j}}{(q)_{j}} \sum_{n\geq j}
\frac{(a/d)^{n}q^{(n+1)(n+2)/2}}{(q)_{n-j}}\\
&=\frac{c-abq}{dq}\sum_{j
=0}^{\infty}\frac{(b/a)^{j}(-c/bd)_{j}}{(q)_{j}} \sum_{n\geq 0}
\frac{(a/d)^{n+j}q^{n(n+3)/2+jn+(j+1)(j+2)/2}}{(q)_{n}}\\
&=\frac{c-abq}{dq}\sum_{j
=0}^{\infty}\frac{(b/d)^{j}(-c/bd)_{j}q^{(j+1)(j+2)/2}}{(q)_{j}}
\sum_{n\geq 0}
\frac{(aq^{j+2}/d)^{n}q^{n(n-1)/2}}{(q)_{n}}\\
&=\frac{c-abq}{dq}\sum_{j
=0}^{\infty}\frac{(b/d)^{j}(-c/bd)_{j}q^{(j+1)(j+2)/2}}{(q)_{j}}
(-aq^{j+2}/d)_{\infty}\\
&=\frac{c-abq}{dq}(-aq^2/d)_{\infty} \sum_{j
=0}^{\infty}\frac{(b/d)^{j}(-c/bd)_{j}\,q^{(j+1)(j+2)/2}}{(q)_{j}(-aq^2/d)_{j}}
.
\end{align*}
} It follows similarly from \eqref{CNeq} that {\allowdisplaybreaks
\begin{align*}
\lim_{N \to \infty} \frac{C_{N}}{d^{N-1}}& =\sum_{j,\, l,\,n\geq
0}a^{j}b^{n-j-l}c^{l}
d^{-n-l}\frac{q^{n(n+1)/2+l(l-1)/2} }{(q)_{j}(q)_{n-j-l}(q)_{l}}\\
&=(-aq/d)_{\infty} \sum_{j
=0}^{\infty}\frac{(b/d)^{j}(-c/bd)_{j}\,q^{j(j+1)/2}}{(q)_{j}(-aq/d)_{j}}
.
\end{align*}
} We omit the details. That \eqref{H1lim} holds is now immediate.

That \eqref{CNlim} and \eqref{DNlim} hold follows immediately upon
letting $d=1$ in the limits above.
\end{proof}

\section{New Proofs of Some Continued Fraction Identities}

In what follows, we make some use of the Jacobi triple product
identity.
\begin{theorem}
For $|q|<1$ and $z \in \mathbb{C}\backslash \{0\}$,
{\allowdisplaybreaks
\[
(-qz;q^{2})_{\infty}(-q/z;q^{2})_{\infty}(q^{2};q^{2})_{\infty}=
\sum_{n=-\infty}^{\infty}z^{n}q^{n^{2}}.
\]
}
\end{theorem}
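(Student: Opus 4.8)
The plan is to give Cauchy's proof, relying only on the $q$-binomial theorem already recorded in Lemma~\ref{qbin}. A pleasant feature of this route is that the normalizing factor $(q^2;q^2)_\infty$ emerges on its own as a limit of Gaussian polynomials, so no separate evaluation of a constant is required (in contrast to, say, a functional-equation argument, where one must prove the Laurent coefficients satisfy $a_n=q^{2n-1}a_{n-1}$ and then pin down $a_0$ by hand).

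First I would expand the one-sided product. Writing $q^{2k-1}=q\,(q^2)^{k-1}$, the product equals $(-qz;q^2)_N=\prod_{k=1}^{N}(1+zq^{2k-1})$, and the first identity of Lemma~\ref{qbin} with base $q^2$ gives
\[
\prod_{k=1}^{N}(1+zq^{2k-1})=\sum_{j=0}^{N}\left[\begin{matrix}N\\j\end{matrix}\right]_{q^2}q^{j^2}z^j ,
\]
since the two sign factors cancel and $(q^2)^{j(j-1)/2}q^{j}$ collapses to $q^{j^2}$.

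Then I would symmetrize. In the last display I replace $N$ by $2N$ and $z$ by $zq^{-2N}$. On the left the exponents recenter, $\prod_{k=1}^{2N}(1+zq^{2k-1-2N})=\prod_{j=1-N}^{N}(1+zq^{2j-1})$; splitting off the factors with $j\le 0$ and extracting $q^{-2i-1}z$ from each of them converts those factors into $z^{N}q^{-N^2}\prod_{k=1}^{N}(1+z^{-1}q^{2k-1})$, using $\sum_{i=0}^{N-1}(2i+1)=N^2$. On the right the index shift $j=N+n$ turns the exponent $j^2-2Nj$ into $n^2-N^2$. After cancelling the common factor $z^{N}q^{-N^2}$ from both sides, the two-sided product appears and one is left with the clean finite identity
\[
\prod_{k=1}^{N}(1+zq^{2k-1})(1+z^{-1}q^{2k-1})=\sum_{n=-N}^{N}\left[\begin{matrix}2N\\N+n\end{matrix}\right]_{q^2}q^{n^2}z^n .
\]

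Finally I would let $N\to\infty$. For each fixed $n$,
\[
\left[\begin{matrix}2N\\N+n\end{matrix}\right]_{q^2}=\frac{(q^2;q^2)_{2N}}{(q^2;q^2)_{N+n}\,(q^2;q^2)_{N-n}}\longrightarrow\frac{1}{(q^2;q^2)_\infty},
\]
so the right-hand side tends to $(q^2;q^2)_\infty^{-1}\sum_{n=-\infty}^{\infty}z^nq^{n^2}$, while the left-hand side tends to $(-qz;q^2)_\infty(-q/z;q^2)_\infty$; multiplying through by $(q^2;q^2)_\infty$ is exactly the asserted identity. The one genuinely analytic point, which I expect to be the main obstacle, is justifying the interchange of the limit with the infinite summation. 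I would handle this by dominated convergence: for $|q|<1$ the partial products $|(q^2;q^2)_m|$ are bounded above and below by positive constants uniformly in $m$, whence the Gaussian polynomials are uniformly bounded in both $N$ and $n$, and the weights $|q|^{n^2}|z|^{n}$ form a summable family over $n\in\mathbb{Z}$ that dominates the tails uniformly in $N$.
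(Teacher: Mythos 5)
Your proof is correct, but there is nothing in the paper to compare it against: the paper contains no proof of this statement at all, since it is the classical Jacobi triple product identity, which the authors simply quote as a known theorem and then use (in Corollaries \ref{c2} and \ref{c3}). What you have written is the standard Cauchy--Gauss argument, and every step checks out. The expansion $\prod_{k=1}^{N}(1+zq^{2k-1})=\sum_{j=0}^{N}\left[\begin{smallmatrix}N\\ j\end{smallmatrix}\right]_{q^{2}}q^{j^{2}}z^{j}$ follows from the paper's Lemma \ref{qbin} in base $q^{2}$ exactly as you say; the recentering $N\to 2N$, $z\to zq^{-2N}$, with the bookkeeping $\sum_{i=0}^{N-1}(2i+1)=N^{2}$ and $(N+n)^{2}-2N(N+n)=n^{2}-N^{2}$, yields the symmetric finite identity; and the passage to the limit is sound. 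In particular, your dominated-convergence (Tannery) justification is legitimate: for $|q|<1$ all partial products $(q^{2};q^{2})_{m}$ are nonzero and converge to the nonzero limit $(q^{2};q^{2})_{\infty}$, so their moduli are bounded above and away from zero uniformly in $m$; hence $\bigl|\left[\begin{smallmatrix}2N\\ N+n\end{smallmatrix}\right]_{q^{2}}\bigr|$ is bounded uniformly in $N$ and $n$, and $C|q|^{n^{2}}|z|^{n}$, $n\in\mathbb{Z}$, is a summable dominating family for any fixed $z\neq 0$. What your route buys is self-containedness: the triple product becomes a consequence of the one $q$-series lemma the paper already records (Lemma \ref{qbin}), with the factor $(q^{2};q^{2})_{\infty}$ arising automatically as the limit of the Gaussian polynomials rather than being imported as a black box; the only cost is the limit-interchange argument, which you have handled correctly.
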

We also need a result of Watson on basic hypergeometric series. An
$_{r} \phi _{s}$ basic hypergeometric series is defined by
{\allowdisplaybreaks
\begin{multline*} _{r} \phi _{s} \left (
\begin{matrix}
a_{1}, a_{2}, \dots, a_{r}\\
b_{1}, \dots, b_{s}
\end{matrix}
; q,x \right ) = \\
\sum_{n=0}^{\infty} \frac{(a_{1};q)_{n}(a_{2};q)_{n}\dots
(a_{r};q)_{n}} {(q;q)_{n}(b_{1};q)_{n}\dots (b_{s};q)_{n}} \left(
(-1)^{n} q^{n(n-1)/2} \right )^{s+1-r}x^{n},
\end{multline*}
}for $|q|<1$.

Watson's theorem is that
\begin{multline*}
_{8} \phi _{7} \left (
\begin{matrix}
A,\,q\sqrt{A},\,-q\sqrt{A},\,B,\,C,\,D,\,E,\,q^{-n}\\
\sqrt{A},\,-\sqrt{A},\,Aq/B,\,Aq/C,\,Aq/D,\,Aq/E,\,Aq^{n+1}\,
\end{matrix}
; q,\frac{A^{2}q^{n+2}}{BCDE} \right ) = \\
 \frac{(A q)_{n} (A q/DE)_{n}  }{(A q/D)_{n} (A q/E)_{n}
}   \,\,   _{4}\phi _{3} \left (
\begin{matrix}
Aq/BC,D,E,q^{-n}\\
Aq/B,Aq/C,DEq^{-n}/A
\end{matrix}\,
; q,q \right ),
\end{multline*}
where $n$ is a non-negative integer. If we let $B$, $D$ and $n \to
\infty$ (as in \cite{H74}), we get
\begin{multline*}
\sum_{r \geq 0} \frac{(1-Aq^{2r})(A)_{r}(C)_{r}(E)_{r}\left(
-A^{2}/CE \right )^{r} q^{3r(r-1)/2+2r}}
{(1-A)(Aq/C)_{r}(Aq/E)_{r}(q)_{r}}\\
= \frac{(Aq)_{\infty}}{(Aq/E)_{\infty}} \sum_{r \geq 0}\frac{
(E)_{r} (-Aq/E)^{r}q^{r(r-1)/2}}{(q)_{r}(Aq/C)_{r}}.
\end{multline*}
If we set $A=c/d^{2}$, $C=-c/ad$ and $E=-c/bd$, we get that
{\allowdisplaybreaks
\begin{align}\label{Wat1} &\lim_{N \to \infty}
\frac{C_{N}}{d^{N-1}}=(-aq/d)_{\infty} \sum_{j
=0}^{\infty}\frac{(bq/d)^{j}(-c/bd)_{j}\,q^{j(j-1)/2}}{(q)_{j}(-aq/d)_{j}}\\
& =\frac{(-aq/d)_{\infty}(-bq/d)_{\infty}}{(cq/d^{2})_{\infty}}\notag\\
 &\times
\sum_{r \geq 0}
\frac{(1-cq^{2r}/d^{2})(-c/ad)_{r}(-c/bd)_{r}(c/d^{2})_{r}\left(
-ab/d^{2} \right )^{r} q^{3r(r-1)/2+2r}}
{(1-c/d^{2})(-aq/d)_{r}(-bq/d)_{r}(q)_{r}}. \notag
\end{align}
}Likewise, if we set $A=cq/d^{2}$, $C=-c/ad$ and $E=-c/bd$, we get
that {\allowdisplaybreaks
\begin{align}\label{Wat2}
&\lim_{N \to \infty} \frac{D_{N}-C_{N}}{d^{N-1}}
=\frac{c-abq}{d}(-aq^2/d)_{\infty} \sum_{j
=0}^{\infty}\frac{(bq^{2}/d)^{j}(-c/bd)_{j}\,q^{j(j-1)/2}}{(q)_{j}(-aq^2/d)_{j}}\\
&=\frac{c-abq}{d}\frac{(-aq^{2}/d)_{\infty}(-bq^{2}/d)_{\infty}}{(cq^{2}/d^{2})_{\infty}}\notag\\
 &\times
\sum_{r \geq 0}
\frac{(1-cq^{2r+1}/d^{2})(-c/ad)_{r}(-c/bd)_{r}(cq/d^{2})_{r}\left(
-abq^{2}/d^{2} \right )^{r} q^{3r(r-1)/2+2r}}
{(1-cq/d^{2})(-aq^{2}/d)_{r}(-bq^{2}/d)_{r}(q)_{r}}. \notag
\end{align}
} We note the first equalities in \eqref{Wat1} and \eqref{Wat2}
imply that the somewhat amusing identity
\[
 \frac{1}{1} \+ \frac{abq+bd}{(a-b)q+d}
 \+ \cds \+ \frac{ab
q^{2n+1}+bdq^n}{(a-b) q^{n+1}+d}\+ \cds = \frac{1}{1+b}
\]
holds for all complex numbers $a$, $b$ and $d$ and all $q$ with
$|q|<1$ and $d \not = -aq^{n}$, $n \geq 1$. This follows upon
setting $c=-bd$ and then replacing $b$ by $-b$. This result also
follows from the following theorem of Pincherle \cite{P94} :


\begin{theorem}\label{T:P*}
(Pincherle)  Let $ \{\,a_{n}\}_{n = 1}^{\infty}$, $\{\,b_{n}\}_{n
= 1}^{\infty}$ and
 $ \{\,G_{n}\}_{n = -1}^{\infty}$ be \\ sequences of real or complex
  numbers satisfying  $a_{n} \neq 0$ for $n\geq 1$
 and for all \,\, $n \geq 1$,
\begin{equation}\label{E:pn}
 G_{n} =   a_{n}G_{n-2}  + b_{n} G_{n - 1}.
\end{equation}
Let $\{B_{n}\}_{n=1}^{\infty}$ denote the denominator convergents
of the continued fraction
 $\K{n = 1}{\infty}\displaystyle{\frac{a_{n}}{b_{n}}}$.

If $\lim_{n \to \infty}\displaystyle{G_{n}/B_{n}} = 0$ then  $\K{n
= 1}{\infty}\displaystyle{\frac{a_{n}}{b_{n}}}$ converges and  its
limit is $-G_{0}/G_{-1}$.
\end{theorem}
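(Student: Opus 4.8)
The plan is to exploit the fact that the numerator and denominator convergents of the continued fraction satisfy the \emph{same} three-term recurrence as the sequence $\{G_n\}$. Consequently $G_n$ can be written as an explicit linear combination of $A_n$ and $B_n$ whose coefficients are fixed by the initial data, and the hypothesis $G_n/B_n\to 0$ then reads off the limit of the convergents directly.

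First I would recall the standard recurrences for the convergents of $\K{n=1}{\infty}\frac{a_n}{b_n}$. Writing $A_n$, $B_n$ for the $n$-th numerator and denominator convergents, one has, for $n\geq 1$,
\[
A_n = b_n A_{n-1}+a_n A_{n-2},\qquad B_n = b_n B_{n-1}+a_n B_{n-2},
\]
with initial conditions $A_{-1}=1$, $A_0=0$, $B_{-1}=0$, $B_0=1$ (the vanishing of $A_0$ reflecting the absence of a $b_0$ term). Both sequences thus satisfy precisely the recurrence $X_n=a_nX_{n-2}+b_nX_{n-1}$ that \eqref{E:pn} imposes on $G_n$. I would then establish, by induction on $n$, the identity
\[
G_n = G_{-1}\,A_n + G_0\,B_n,\qquad n\geq -1 .
\]
The cases $n=-1$ and $n=0$ hold by the initial values above, and the inductive step is immediate since all three sequences obey the same linear recurrence.

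Dividing by $B_n$ (which is nonzero for large $n$, as is implicit in the assumption $G_n/B_n\to 0$) gives $G_n/B_n = G_{-1}(A_n/B_n)+G_0$. Letting $n\to\infty$ and using $G_n/B_n\to 0$ shows $G_{-1}(A_n/B_n)\to -G_0$; since $G_{-1}\neq 0$ (otherwise $G_n=G_0B_n$ and $G_n/B_n=G_0$, which cannot tend to $0$ for a nontrivial solution), the $n$-th convergents $A_n/B_n$ tend to the finite value $-G_0/G_{-1}$. This is exactly the assertion that $\K{n=1}{\infty}\frac{a_n}{b_n}$ converges with limit $-G_0/G_{-1}$.

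The computation is short, so there is no serious analytic obstacle; the only real care is bookkeeping. The point to get right is matching the continued-fraction conventions so that the linear-combination identity carries the clean coefficients $G_{-1}$ and $G_0$, and noting that the hypothesis $a_n\neq 0$ is what guarantees $A_n$ and $B_n$ are independent solutions (via the continuant determinant $A_nB_{n-1}-A_{n-1}B_n=(-1)^{n-1}\prod_{k=1}^{n}a_k$), so that the representation of the arbitrary solution $G_n$ in their terms is valid.
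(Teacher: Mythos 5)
Your proof is correct, but there is nothing in the paper to compare it against: the paper states Theorem \ref{T:P*} as a quoted result, with attribution to Pincherle \cite{P94}, and gives no proof of it. Your argument is the standard one for this direction of Pincherle's theorem. Since $\{A_n\}$ and $\{B_n\}$ satisfy the same recurrence \eqref{E:pn} with initial values $A_{-1}=1$, $A_0=0$, $B_{-1}=0$, $B_0=1$, the identity $G_n=G_{-1}A_n+G_0B_n$ follows by induction anchored at $n=-1,0$; dividing by $B_n$ (nonzero for large $n$, as the hypothesis presupposes) and letting $n\to\infty$ gives $A_n/B_n\to -G_0/G_{-1}$ whenever $G_{-1}\neq 0$. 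Two minor remarks. First, your closing appeal to the determinant identity $A_nB_{n-1}-A_{n-1}B_n=(-1)^{n-1}\prod_{k=1}^{n}a_k$ is unnecessary: the induction proves the representation of $G_n$ outright from the matched initial data, and no linear-independence argument is needed (independence would matter only if you wanted to represent a solution prescribed at two later indices). Second, in the case $G_{-1}=0$ the hypothesis forces $G_0=0$, so $\{G_n\}$ is the zero sequence and the asserted value $-G_0/G_{-1}$ is meaningless; the theorem therefore implicitly assumes a nontrivial solution, which is exactly the reading your parenthetical ``for a nontrivial solution'' adopts --- worth stating explicitly, but not a gap.
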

However, Pincherle's theorem is less informative in that it does
not show that the numerators converge to $(-aq;q)_{\infty}$ and
that the denominators converge to $(1+b)(-aq;q)_{\infty}$.

The symmetry in $a$ and $b$ in the continued fraction of Theorem
\ref{1/qth} can be exploited to prove something a little less
trivial.
\begin{corollary}\label{cram}
Let $a$, $b$, $c$ and $q$ be complex numbers with $|q|<1$ and
$a\not =0$. Then
\begin{equation}\label{absym1}
(-aq)_{\infty} \sum_{j
=0}^{\infty}\frac{(bq)^{j}(-c/b)_{j}\,q^{j(j-1)/2}}{(q)_{j}(-aq)_{j}}
=(-bq)_{\infty} \sum_{j
=0}^{\infty}\frac{(aq)^{j}(-c/a)_{j}\,q^{j(j-1)/2}}{(q)_{j}(-bq)_{j}}.
\end{equation}
If, in addition,   $1-bq^{n} \not =0$ for $n \geq 1$, then
\begin{equation}\label{rameq}
 \sum_{j
=0}^{\infty}\frac{(-b/a;q)_{j}a^{j}\,q^{j(j+1)/2}}{(q)_{j}(bq)_{j}}
=\frac{(-aq;q)_{\infty}}{(bq;q)_{\infty}}.
\end{equation}
\end{corollary}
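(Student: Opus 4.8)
My plan is to read \eqref{absym1} as a statement about the continued fraction $H_1$ and exploit the visible $a\leftrightarrow b$ symmetry of that continued fraction, then to obtain \eqref{rameq} as a degenerate special case by a judicious choice of $c$.

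First I would establish \eqref{absym1} directly from Theorem~\ref{1/qth}(iii). The right-hand limit of the numerator convergents is, by \eqref{Wat1},
\[
\lim_{N\to\infty}\frac{C_N}{d^{N-1}}=(-aq/d)_{\infty}\sum_{j=0}^{\infty}\frac{(bq/d)^{j}(-c/bd)_{j}\,q^{j(j-1)/2}}{(q)_{j}(-aq/d)_{j}}.
\]
Setting $d=1$ gives exactly the left side of \eqref{absym1}. The key point is that $C_N$ is the $N$-th numerator convergent of $H_1(a,b,c,d,q)$, and the defining continued fraction $H_1(a,b,c,d,q)$ is manifestly invariant under interchanging $a$ and $b$ (the partial numerators $-abq^{2n+1}+cq^n$ and partial denominators $(a+b)q^{n+1}+d$ are both symmetric in $a,b$). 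Hence $C_N$ itself is symmetric in $a$ and $b$ for every $N$, and therefore so is its limit. Equating the $b$-form of the limit with the $a$-form (swap the roles of $a$ and $b$ in the summation formula, which also swaps them inside $(-c/bd)_j\to(-c/ad)_j$ and the prefactor) yields \eqref{absym1} after setting $d=1$. This is the cleanest route: the symmetry is a structural fact about the continued fraction, not something one needs to verify term-by-term in the series.

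For \eqref{rameq} I would specialize \eqref{absym1} so that one of the two sides collapses. The natural choice is to kill the factor $(-c/b)_j$ or $(-c/a)_j$ by making it vanish for $j\geq 1$. Taking $c=a$ (so that $-c/a=-1$) makes $(-c/a)_j=(-1;q)_j$, which does not collapse; instead the right choice is to force the Pochhammer symbol to terminate the relevant series after the $j=0$ term. Concretely, I expect to set $c$ so that $(-c/a)_j$ becomes $(something)_j$ that is annihilated, or more simply to recognize that the stated identity \eqref{rameq} is the special case where the $(-c/b)$-side reduces to a single geometric-type sum. Replacing $b\to -b$ and choosing $c=-a$ (so $-c/a=1$ and $(-c/a)_j=(q)_j\cdots$ — I will check which substitution makes $(-c/a)_j/(q)_j$ telescope) should turn the right side of \eqref{absym1} into $(-bq)_\infty$ times a sum that reduces to $1$, leaving the left side equal to $\sum_j(-b/a;q)_j a^j q^{j(j+1)/2}/((q)_j(bq)_j)$ up to the displayed normalization.

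The main obstacle will be the bookkeeping in this last specialization: matching the shifted exponent $q^{j(j-1)/2}$ in \eqref{absym1} against the $q^{j(j+1)/2}$ appearing in \eqref{rameq}, and tracking how the sign change $b\to -b$ together with the choice of $c$ converts $(-c/b)_j$ into $(-b/a;q)_j$ and simultaneously forces the companion series to sum to $1/(bq)_\infty$. I expect the correct substitution is $c=-a$ combined with $b\to -b$, after which the $a$-indexed side of \eqref{absym1} has $(-c/a)_j=(1;q)_j=0$ for $j\geq 1$, so that entire side degenerates to its $j=0$ term $(-bq)_\infty\cdot 1=(-bq)_\infty$ (with the sign flip $(bq)_\infty$ after $b\to-b$). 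Dividing through then leaves precisely \eqref{rameq}. Verifying that $(-c/a)_j$ genuinely vanishes for $j\geq 1$ under the chosen $c$, and that the $q$-power exponents line up after the substitution, is the only delicate computation; everything else follows from the symmetry argument and the already-established limit \eqref{Wat1}.
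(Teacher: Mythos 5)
Your proposal is correct and takes essentially the same route as the paper: there too, \eqref{absym1} follows from the $a\leftrightarrow b$ symmetry of the continued fraction in Theorem \ref{1/qth} combined with the $d=1$ case of the first equality in \eqref{Wat1}, and \eqref{rameq} is obtained by choosing $c$ so that a factor $(1;q)_j$ annihilates one side of \eqref{absym1}, followed by the sign flip $b\to-b$. The only (cosmetic) difference is that the paper takes $c=-b$, which collapses the left side and lands directly on \eqref{rameq}, whereas your choice $c=-a$ collapses the right side and yields an identity that becomes \eqref{rameq} only after the further relabeling $(a,b)\mapsto(-b,-a)$; this is harmless, since the identity holds for all admissible parameters, but the output of your substitution is not literally \eqref{rameq} as written.
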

The identity at \eqref{absym1} is found in Ramanujan's lost
notebook \cite{S88} and a proof can be found in the recent book by
Andrews and Berndt \cite{AB05}.

The second  identity is found in Ramanujan's notebooks (see
\cite{B94}, Chapter 27, Entry 1, page 262). This identity is also
equivalent to a result found in Andrews \cite{A72}, where Andrews
attributes it to Cauchy.
\begin{proof}
Let $d=1$ in Theorem \ref{1/qth}. The symmetry in $a$ and $b$ in
the continued fraction in Theorem \ref{1/qth} and the first
equality in \eqref{Wat1} give \eqref{absym1} immediately.

Ramanujan's  result \eqref{rameq} follows  after setting $c=-b$
and then replacing $b$ by $-b$.
\end{proof}

We now prove some continued fraction identities.
\begin{corollary}\label{c1}
If $|q|<1$, then
\begin{equation}\label{q2q3}
\frac{1}{1} \-\frac{q}{q+1}\-\frac{q^3}{q^{2}+1}\-\cds
\-\frac{q^{2n-1}}{q^{n}+1}\-\cds
=\frac{(q^{2};q^{3})_{\infty}}{(q;q^{3})_{\infty}},
\end{equation}
with the numerators converging to $1/(q;q^{3})_{\infty}$ and the
denominators converging to $1/(q^{2};q^{3})_{\infty}$.
\end{corollary}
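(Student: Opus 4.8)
The plan is to obtain \eqref{q2q3} as a specialization of Theorem~\ref{1/qth}. Comparing the $n$-th partial quotient $\frac{-abq^{2n+1}+cq^{n}}{(a+b)q^{n+1}+d}$ of $H_{1}(a,b,c,d,q)$ (read off from the $\+$ form, indexed from $n=0$) with the continued fraction in \eqref{q2q3}, whose $(n{+}1)$-st partial quotient is $\frac{-q^{2n+1}}{q^{n+1}+1}$ once the minus signs are absorbed into $\+$, I would take $c=0$, $d=1$, and let $a,b$ be the two roots of $t^{2}-t+1=0$, i.e.\ the primitive sixth roots of unity $e^{\pm i\pi/3}$, so that $a+b=1$ and $ab=1$. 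With these choices the two continued fractions agree term for term (the leading $\tfrac{1}{1}$ included), so by Theorem~\ref{1/qth}(iii) the value of the continued fraction is $C_{\infty}/D_{\infty}$, and it remains only to show $C_{\infty}=1/(q;q^{3})_{\infty}$ and $D_{\infty}=1/(q^{2};q^{3})_{\infty}$; these two evaluations are also precisely the asserted separate limits of the numerator and denominator convergents.

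To evaluate $C_{\infty}$ I would use the Watson-transformed form \eqref{Wat1}. Putting $c=0$ annihilates every factor containing $c$ (since $(0;q)_{r}=1$), and the conditions $a+b=ab=1$ make the remaining products collapse: $(1+aq^{k})(1+bq^{k})=1+q^{k}+q^{2k}=(1-q^{3k})/(1-q^{k})$, so that $(-aq)_{r}(-bq)_{r}(q)_{r}=(q^{3};q^{3})_{r}$ and $(-aq)_{\infty}(-bq)_{\infty}=(q^{3};q^{3})_{\infty}/(q;q)_{\infty}=1/\bigl[(q;q^{3})_{\infty}(q^{2};q^{3})_{\infty}\bigr]$. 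The series in \eqref{Wat1} then becomes $\sum_{r\ge 0}(-1)^{r}q^{(3r^{2}+r)/2}/(q^{3};q^{3})_{r}$; writing $(3r^{2}+r)/2=3\binom{r}{2}+2r$ and invoking the limiting case $\sum_{r\ge0}(-z)^{r}q^{3\binom{r}{2}}/(q^{3};q^{3})_{r}=(z;q^{3})_{\infty}$ of the $q$-binomial theorem (Lemma~\ref{qbin} with base $q^{3}$ and $z=q^{2}$) identifies it as $(q^{2};q^{3})_{\infty}$. Multiplying, the factor $(q^{2};q^{3})_{\infty}$ cancels and $C_{\infty}=1/(q;q^{3})_{\infty}$.

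The evaluation of $D_{\infty}$ is where I expect the real work to lie. Starting from \eqref{Wat2} with the same specialization, the prefactor reduces to $-q$ and the product to $(-aq^{2})_{\infty}(-bq^{2})_{\infty}$, which again simplifies through $1+q^{k}+q^{2k}=(1-q^{3k})/(1-q^{k})$. The obstruction is that the summand now carries $(-aq^{2})_{r}(-bq^{2})_{r}(q)_{r}$: because the two shifted products begin at $q^{2}$ while $(q)_{r}$ begins at $q$, they do \emph{not} combine into a single Pochhammer symbol, and a stray factor survives, so the series is not immediately an Euler sum. I would resolve this by writing $(-aq^{2})_{r}(-bq^{2})_{r}=(q^{6};q^{3})_{r}/(q^{2};q)_{r}$, reindexing with $s=r+1$ to turn the denominator into $(q^{3};q^{3})_{s}$, and splitting the resulting sum into two. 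Each part is summed by the same limiting $q$-binomial identity with base $q^{3}$, giving $(q;q^{3})_{\infty}$ and $(q^{2};q^{3})_{\infty}$; their difference cancels the auxiliary factors and leaves $D_{\infty}-C_{\infty}=1/(q^{2};q^{3})_{\infty}-1/(q;q^{3})_{\infty}$, whence $D_{\infty}=1/(q^{2};q^{3})_{\infty}$.

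Assembling these, the continued fraction equals $C_{\infty}/D_{\infty}=(q^{2};q^{3})_{\infty}/(q;q^{3})_{\infty}$, which is \eqref{q2q3}, while $C_{\infty}=1/(q;q^{3})_{\infty}$ and $D_{\infty}=1/(q^{2};q^{3})_{\infty}$ are exactly the claimed separate limits of the numerator and denominator convergents. The single delicate step is the shifted-product bookkeeping in the $D_{\infty}$ computation; the rest is a direct specialization of Theorem~\ref{1/qth} combined with Watson's transformation and the $q$-binomial theorem.
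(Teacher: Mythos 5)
Your proposal is correct and is essentially the paper's own proof: your choice of $a,b$ as the roots of $t^{2}-t+1=0$ is exactly the paper's $a=-\omega$, $b=-\omega^{2}$ with $\omega=e^{2\pi i/3}$, and both arguments then evaluate \eqref{Wat1} and \eqref{Wat2} by collapsing $(-aq^{k})_{r}(-bq^{k})_{r}(q)_{r}$-type products into $(q^{3};q^{3})$-Pochhammers and applying the limiting $q$-binomial (Euler) identity. Your handling of the shifted products in the $D_{\infty}$ computation (via $(q^{6};q^{3})_{r}/(q^{2};q)_{r}$, reindexing $s=r+1$, and splitting into two Euler sums yielding $(q;q^{3})_{\infty}-(q^{2};q^{3})_{\infty}$) is the same maneuver the paper performs with its $(1-q^{r+1})$ correction factor, differing only in bookkeeping.
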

This continued fraction is due to Ramanujan and can be found in
his second notebook (\cite{R57}, page 290). It has been proved in
\cite{ABJL92}  and in \cite{ABSYZ03}. Both proofs are quite
difficult and it is not clearly evident that the numerators and
denominators converge separately. We feel this present proof is
simpler, although it uses the power of Watson's Theorem.
\begin{proof}
Let $\omega = \exp(2 \pi \imath/3)$ and set $a=-\omega$, $b= -
\omega^{2}$, $c=0$ and $d=1$ in Theorem \ref{1/qth}, so that the
continued fraction in the theorem is the continued fraction in
Corollary \ref{c1}. By the second equality in \eqref{Wat1},
 {\allowdisplaybreaks
\begin{align*}
\lim_{n \to \infty} C_{n} &= (\omega q)_{\infty}(\omega^{2}
q)_{\infty} \sum_{r \geq 0} \frac{\left( -q^{2} \right )^{r}
q^{3r(r-1)/2}} {(\omega q)_{r}(\omega^{2}q)_{r}(q)_{r}}\\
& = (\omega q)_{\infty}(\omega^{2} q)_{\infty} \sum_{r \geq 0}
\frac{\left( -q^{2} \right )^{r} (q^{3})^{r(r-1)/2}}
{(q^{3};q^{3})_{r}}\\ & = (\omega q)_{\infty}(\omega^{2}
q)_{\infty}(q^{2};q^{3})_{\infty}\\
&= \frac{(\omega q)_{\infty}(\omega^{2}
q)_{\infty}( q)_{\infty}(q^{2};q^{3})_{\infty}}{( q)_{\infty}}\\
&= \frac{(q^{3};q^{3})_{\infty}(q^{2};q^{3})_{\infty}}{(
q)_{\infty}}\\
&=\frac{1}{(q;q^{3})_{\infty}}.
\end{align*}
} The third equality above follows from the $q$-binomial theorem.
By the second equality in \eqref{Wat2} {\allowdisplaybreaks
\begin{align*} \lim_{n \to \infty}
D_{n}-C_{n}& =-q(\omega q^{2})_{\infty}(\omega^{2} q^{2})_{\infty}
\sum_{r \geq 0} \frac{\left( -1 \right )^{r} q^{3r(r-1)/2+4r}}
{(\omega q^{2})_{r}(
\omega^{2} q^{2})_{r}(q)_{r}}\\
& =-q(\omega q)_{\infty}(\omega^{2} q)_{\infty} \sum_{r \geq 0}
\frac{\left( -1 \right )^{r} q^{3r(r-1)/2+4r}(1-q^{r+1})} {(\omega
q)_{r+1}(
\omega^{2} q)_{r+1}(q)_{r+1}}\\
& =q(\omega q)_{\infty}(\omega^{2} q)_{\infty} \sum_{r \geq 1}
\frac{\left( -1 \right )^{r} q^{(3r^2-r)/2-1}(1-q^{r})} {(\omega
q)_{r}( \omega^{2} q)_{r}(q)_{r}}\\
& =(\omega q)_{\infty}(\omega^{2} q)_{\infty} \sum_{r \geq 1}
\frac{\left( -1 \right )^{r} q^{(3r^2-3r)/2}(q^{r}-q^{2r})}
{(\omega
q)_{r}( \omega^{2} q)_{r}(q)_{r}}\\
& =(\omega q)_{\infty}(\omega^{2} q)_{\infty} \sum_{r \geq 0}
\frac{\left( -1 \right )^{r} (q^{3})^{r(r-1)/2}(q^{r}-q^{2r})}
{(q^{3};q^{3})_{r}}\\
& =(\omega q)_{\infty}(\omega^{2}
q)_{\infty}((q;q^{3})_{\infty}-(q^{2};q^{3})_{\infty}).
\end{align*}
} Here again we have used the $q$-binomial theorem. From the third
expression above for $\lim_{n \to \infty}C_{n}$, it follows that
{\allowdisplaybreaks
\begin{align*} \lim_{n \to \infty} D_{n}&
=(\omega q)_{\infty}(\omega^{2}
q)_{\infty}(q;q^{3})_{\infty}\\
& =\frac{(\omega q)_{\infty}(\omega^{2} q)_{\infty}(
q)_{\infty}(q;q^{3})_{\infty}}{( q)_{\infty}}\\
& =\frac{(q^{3};q^{3})_{\infty}(q;q^{3})_{\infty}}{( q)_{\infty}}\\
& =\frac{1}{(q^{2}; q^{3})_{\infty}}.
\end{align*}
}
\end{proof}

\begin{corollary}\label{c2}
If $|q|<1$, then
\begin{align}\label{z3}
S(q):= \frac{1}{1} \+
 \frac{q+q^{2}}{1}
\+
 \frac{q^{2}+q^{4}}{1}
\+
 \frac{q^{3}+q^{6}}{1}
\+ \cds =\frac{(q;q^{2})_{\infty}}{(q^{3};q^{6})_{\infty}^{3}}.
\end{align}
\end{corollary}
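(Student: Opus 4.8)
The goal is to identify the continued fraction
\[
S(q)= \frac{1}{1} \+ \frac{q+q^{2}}{1} \+ \frac{q^{2}+q^{4}}{1} \+ \frac{q^{3}+q^{6}}{1} \+ \cds
\]
as a special case of $H_{1}(a,b,c,d,q)$ and then evaluate the resulting $q$-series. The first task is to match the partial numerators and denominators. In $H_{1}(a,b,c,d,q)$ the $n$-th partial numerator is $-abq^{2n+1}+cq^{n}$ and the $n$-th partial denominator is $(a+b)q^{n+1}+d$. To make every denominator equal to $1$ I would choose $d=1$ and $a+b=0$, i.e.\ $b=-a$; then the partial numerator becomes $a^{2}q^{2n+1}+cq^{n}=q^{n}(a^{2}q^{n+1}+c)$, which I want to equal $q^{n}+q^{2n}=q^{n}(1+q^{n})$. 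Comparing term-by-term forces $c=1$ and $a^{2}q^{n+1}=q^{n}$, which cannot hold for all $n$ simultaneously. So a direct substitution with constant parameters will not work, and an equivalence transformation (rescaling the continued fraction to absorb the mismatched powers of $q$) or a change of variable $q\to q^{t}$ will be needed first; I expect the correct identification to send $q\to$ some power of $q$ and pick $a,b,c,d$ so that, after an equivalence transformation, the tails line up. Pinning down this correct substitution is the first real step.

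Once the parameters are identified, I would invoke part (iii) of Theorem~\ref{1/qth} (with $d=1$) to obtain $C_{\infty}$ and $D_{\infty}$ as explicit $q$-series, and more usefully the Watson-transformed versions \eqref{Wat1} and \eqref{Wat2}, exactly as was done in the proof of Corollary~\ref{c1}. The pattern from Corollary~\ref{c1} is the template: there $a=-\omega$, $b=-\omega^{2}$, $c=0$ collapsed the $_{8}\phi_{7}$-side sums into theta-type series in base $q^{3}$ via the $q$-binomial theorem, producing the clean products $1/(q;q^{3})_{\infty}$ and $1/(q^{2};q^{3})_{\infty}$. For the present corollary I expect an analogous collapse, but now the target product $(q;q^{2})_{\infty}/(q^{3};q^{6})_{\infty}^{3}$ signals that the relevant base is $q^{6}$ (note $q^{3};q^{6}$) with a cube appearing, so after the Watson transformation the summand should reduce to a series summable by the Jacobi triple product identity stated just above in the paper, giving a theta function in base $q^{6}$, whose cube (or a product of three related thetas) yields the denominator $(q^{3};q^{6})_{\infty}^{3}$.

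Concretely, I would compute $C_{\infty}$ and $D_{\infty}-C_{\infty}$ separately from \eqref{Wat1}--\eqref{Wat2} with the chosen parameters, simplify each $r$-sum using the $q$-binomial theorem or Jacobi triple product to infinite products, and then form the limit
\[
S(q)=\lim_{N\to\infty}\frac{C_{N}}{D_{N}}=\frac{C_{\infty}}{D_{\infty}}
\]
(after dividing numerator and denominator by the common factor $d^{N-1}=1$). The final algebra should telescope the product expressions into $(q;q^{2})_{\infty}/(q^{3};q^{6})_{\infty}^{3}$ after recognizing identities among products such as $(q;q)_{\infty}=(q;q^{2})_{\infty}(q^{2};q^{2})_{\infty}$ and $(q^{3};q^{3})_{\infty}=(q^{3};q^{6})_{\infty}(q^{6};q^{6})_{\infty}$.

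**Main obstacle.** The hard part will be the very first step: finding the correct parameter specialization (and accompanying equivalence transformation or substitution $q\to q^{k}$) that turns $H_{1}(a,b,c,d,q)$ into $S(q)$, since the naive choice is inconsistent. After that, the computation is a guided repetition of the Corollary~\ref{c1} argument, with the key judgment being which application of the Jacobi triple product identity produces the cube $(q^{3};q^{6})_{\infty}^{3}$ rather than a single theta factor.
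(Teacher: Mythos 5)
There is a genuine gap, and it sits exactly at the step you identified as the main obstacle. Your matching equations are in fact consistent: you derived $c=1$ and $a^{2}q^{n+1}=q^{n}$, and the latter is equivalent to $a^{2}q=1$, i.e.\ $a^{2}=1/q$, which is \emph{independent of $n$}. Nothing in Theorem \ref{1/qth} requires $a,b,c,d$ to be independent of $q$; they are just complex numbers for each fixed $q$ with $|q|<1$. So the ``naive'' substitution works: take $a=-q^{-1/2}$, $b=q^{-1/2}$, $c=1$, $d=1$, which is precisely what the paper does. Then $a+b=0$ makes every partial denominator equal to $1$, and $-abq^{2n+1}+cq^{n}=q^{2n}+q^{n}$ gives the wanted partial numerators. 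Your conclusion that this ``cannot hold for all $n$ simultaneously'' is an algebra slip, and because of it you never pin down the specialization; the proposed fallback (an equivalence transformation or $q\to q^{t}$) is left entirely unexecuted, so the proof does not get off the ground.

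Two further points would still need repair even after fixing the substitution. First, with these parameters the $n=0$ partial numerator of $H_{1}$ is $-abq+c=2$, so $H_{1}(-q^{-1/2},q^{-1/2},1,1,q)$ is not $S(q)$ but $1/(1+2S(q))$; consequently the extraction is $S(q)=(D_{\infty}-C_{\infty})/(2C_{\infty})$, not $C_{\infty}/D_{\infty}$ as you propose. Second, your outline of the evaluation (use \eqref{Wat1} and \eqref{Wat2}, then the Jacobi triple product, then product manipulations) is indeed the paper's route, modeled correctly on Corollary \ref{c1}; in the paper the Watson sums collapse to theta series such as $\sum_{r}(q^{3/2})^{r^{2}}(q^{1/2})^{r}$ and $\sum_{r}(q^{3/2})^{r^{2}}(q^{3/2})^{r}$, whose triple-product evaluations in base $q^{3}$ (not $q^{6}$) combine to give $(q;q^{2})_{\infty}/(q^{3};q^{6})_{\infty}^{3}$. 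So the skeleton of your plan matches the paper, but the proof as written fails at its first and decisive step.
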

This continued fraction is also due to Ramanujan and can be found
in his second notebook (\cite{R57}, page 373). The first proofs in
print are due to Watson \cite{W29} and Selberg \cite{S36}. Other
proofs are due to Gordon \cite{G65}, Andrews \cite{A68} and
Hirschhorn \cite{H80}.
\begin{proof}
Set $a=-1/q^{1/2}$, $b= 1/q^{1/2}$, $c=1$ and $d=1$ in Theorem
\ref{1/qth}, so that the continued fraction in the theorem is \[
\frac{1}{1+2S(q)}.\] By the second equality in \eqref{Wat1},
{\allowdisplaybreaks
\begin{align*}
\lim_{n \to \infty} C_{n} &= \frac{( q^{1/2})_{\infty}(-q^{1/2}
)_{\infty} }{(q)_{\infty}} \\
&\times\left ( 1+\sum_{r \geq 1}
\frac{(1-q^{2r})(q^{1/2})_{r}(-q^{1/2})_{r}(q)_{r-1}
q^{3r(r-1)/2+r}} {( q^{1/2})_{r}(-q^{1/2})_{r}(q)_{r}}\right)\\
&= \frac{( q;q^{2})_{\infty} }{(q)_{\infty}} \left ( 1+\sum_{r \geq 1}(1+q^{r}) q^{(3r^{2}-r)/2}\right)\\
&= \frac{( q;q^{2})_{\infty} }{(q)_{\infty}}\sum_{r =-\infty}^{\infty} (q^{3/2})^{r^{2}}(q^{1/2})^{r}\\
&= \frac{( q;q^{2})_{\infty}
}{(q)_{\infty}}(-q;q^{3})_{\infty}(-q^{2};q^{3})_{\infty}(q^{3};q^{3})_{\infty}.
\end{align*}
} The last equality above follows from the Jacobi triple product
identity. By the second equality in \eqref{Wat2},
{\allowdisplaybreaks
\begin{align*} \lim_{n \to \infty}
D_{n}-C_{n}&= 2\frac{( q^{3/2})_{\infty}(-q^{3/2} )_{\infty}
}{(q^{2})_{\infty}}
\\ &\times
\sum_{r \geq 0}
\frac{(1-q^{2r+1})(q^{1/2})_{r}(-q^{1/2})_{r}(q)_{r}
q^{3r(r-1)/2+3r}} {(1-q)( q^{3/2})_{r}(-q^{3/2})_{r}(q)_{r}}\\
&= 2\frac{( q;q^{2})_{\infty} }{(q)_{\infty}}
\sum_{r \geq 0} q^{3r^{2}/2+3r/2}\\
&= \frac{( q;q^{2})_{\infty} }{(q)_{\infty}} \sum_{r
=-\infty}^{\infty}
(q^{3/2})^{r^{2}}(q^{3/2})^{r}\\
&= \frac{( q;q^{2})_{\infty} }{(q)_{\infty}} (-1;q^{3})_{\infty}(-q^{3};q^{3})_{\infty}(q^{3};q^{3})_{\infty}\\
&= 2\frac{( q;q^{2})_{\infty} }{(q)_{\infty}}
(-q^{3};q^{3})_{\infty}(-q^{3};q^{3})_{\infty}(q^{3};q^{3})_{\infty}.
\end{align*} }
Here again we have used the Jacobi triple product theorem.

The result now follows after a little algebra and some elementary
infinite product manipulations.
\end{proof}
We next give a proof of the Rogers-Ramanujan identities.
Hirschhorn has given a proof in \cite{HT80} of these identities
that is equivalent to setting $a=b=0$, $c=1$ and $d=0$ in
  \eqref{Wat1} and \eqref{Wat2}, whereupon the identities follow after applying the Jacobi
Triple product identity. We include this proof because of its
elegance.
 We also give a different proof for the infinite product representations
for the numerator and denominator (see Corollary \ref{c3} below).
For this we need two identities, one due to Rogers \cite{R17} and
one due to Andrews \cite{A85} (identities A.44 and A.62 in
Slater's list \cite{S52}): {\allowdisplaybreaks
\begin{equation}\label{a44}
\sum_{r=0}^{\infty}\frac{q^{3r(r+1)/2}}{(q;q^{2})_{r+1}(q;q)_{r}}=
\frac{(q^{8};q^{10})_{\infty}(q^{2};q^{10})_{\infty}(q^{10};q^{10})_{\infty}}{(q;q)_{\infty}},
\end{equation}
} {\allowdisplaybreaks
\begin{equation}\label{a46}
\sum_{r=0}^{\infty}\frac{(-q;q)_{r}q^{r(3r+1)/2}}{(q;q)_{2r+1}}=
\frac{(q^{6};q^{10})_{\infty}(q^{4};q^{10})_{\infty}(q^{10};q^{10})_{\infty}}{(q;q)_{\infty}}.
\end{equation}
}

\begin{corollary}\label{c3}
Let $A_{n}(q)$ and $B_{n}(q)$ denote the $n$-th numerator and
denominator, respectively of the continued fraction
\[
K(q)=1+
 \frac{q}{1}
\+
 \frac{q^{2}}{1}
\+
 \frac{q^{3}}{1}
\+\,\cds.
\]
Then {\allowdisplaybreaks
\begin{align}\label{rrids}
\lim_{n \to \infty}
A_{n}(q)&=\sum_{n=0}^{\infty}\frac{q^{n^{2}}}{(q;q)_{n}}=\frac{1}{(q;q^{5})_{\infty}(q^{4}
;q^{5})_{\infty}}
,\\
&\phantom{as} \notag \\
\lim_{n \to \infty}
B_{n}(q)&=\sum_{n=0}^{\infty}\frac{q^{n^{2}+n}}{(q;q)_{n}}
=\frac{1}{(q^{2};q^{5})_{\infty}(q^{3};q^{5})_{\infty}}. \notag
\end{align}
}
\end{corollary}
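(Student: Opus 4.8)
The plan is to recognize $K(q)$ as a specialization of the continued fraction $H_{1}(a,b,c,d,q)$ of Theorem \ref{1/qth}, read off the two limits from part (iii), and then convert the resulting $q$-series into infinite products using Watson's transformation \eqref{Wat1}--\eqref{Wat2} together with the Jacobi triple product identity. The corollary asserts two things at once: that the numerator and denominator convergents of $K(q)$ converge \emph{separately} to the two Rogers--Ramanujan series, and that those series equal the advertised products.

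First I would specialize the parameters. Setting $a=b=0$, $c=1$, $d=1$ collapses the general partial quotient $\frac{-abq^{2n+1}+cq^{n}}{(a+b)q^{n+1}+d}$ to $\frac{q^{n}}{1}$, so that $H_{1}(0,0,1,1,q)=\frac{1}{1}\+\frac{1}{1}\+\frac{q}{1}\+\frac{q^{2}}{1}\+\cds$. This shares the tail $\frac{q}{1}\+\frac{q^{2}}{1}\+\cds$ with $K(q)=1+\frac{q}{1}\+\frac{q^{2}}{1}\+\cds$; since prepending the two partial quotients $\frac{1}{1}\+\frac{1}{1}$ is the linear fractional map $w\mapsto(1+w)/(2+w)$, a short computation with the convergent recurrences gives $C_{n+2}=A_{n}$ and $D_{n+2}=A_{n}+B_{n}$. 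Granting the separate convergence of $C_{N}$ and $D_{N}$ from Theorem \ref{1/qth}(iii) (valid since $d=1$), this yields $\lim_{n}A_{n}=C_{\infty}$ and $\lim_{n}B_{n}=D_{\infty}-C_{\infty}$. To evaluate these, I would write $b^{j}(-c/b)_{j}=\prod_{k=0}^{j-1}(b+cq^{k})$, which makes \eqref{CNlim} and \eqref{DNlim} manifestly regular at $a=b=0$; putting $a=b=0$, $c=1$ gives $b^{j}(-c/b)_{j}=q^{j(j-1)/2}$, whence $C_{\infty}=\sum_{j\ge0}q^{j^{2}}/(q)_{j}$ and $D_{\infty}-C_{\infty}=\sum_{j\ge0}q^{j^{2}+j}/(q)_{j}$. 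These are exactly the two Rogers--Ramanujan series, establishing the first equality in each line of \eqref{rrids}.

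It then remains to identify these series with the products. For this I would equate the two expressions in \eqref{Wat1} and in \eqref{Wat2} and pass to the same limit $a,b\to0$, $c=d=1$. The left-hand (hypergeometric) sides reproduce the two series above; the difficulty is that the very-well-poised right-hand sides are singular at $a=b=0$, since $(-c/ad)_{r}$ and $(-c/bd)_{r}$ blow up while $(-ab/d^{2})^{r}$ vanishes. The key manipulation is to pair these factors as $[a^{r}(-c/ad)_{r}][b^{r}(-c/bd)_{r}](-1/d^{2})^{r}$ and use $a^{r}(-c/ad)_{r}=\prod_{k=0}^{r-1}(a+(c/d)q^{k})\to(c/d)^{r}q^{r(r-1)/2}$, and similarly for $b$, so that the product has finite limit $(-c^{2}/d^{4})^{r}q^{r(r-1)}$; for \eqref{Wat1} (where the well-poised base $A=c/d^{2}\to1$) one also rewrites $(A)_{r}/(1-A)=(Aq)_{r-1}$ to remove the apparent $0/0$, an issue that does not arise in \eqref{Wat2}, where $A=cq/d^{2}\to q$. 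After collecting powers of $q$, the surviving one-sided sums telescope via $1-q^{2r}=(1-q^{r})(1+q^{r})$ (respectively the factor $1-q^{2r+1}$) into the bilateral theta series $\sum_{r=-\infty}^{\infty}(-1)^{r}q^{(5r^{2}-r)/2}$ and $\sum_{r=-\infty}^{\infty}(-1)^{r}q^{(5r^{2}+3r)/2}$. The Jacobi triple product identity evaluates these as $(q^{2};q^{5})_{\infty}(q^{3};q^{5})_{\infty}(q^{5};q^{5})_{\infty}$ and $(q;q^{5})_{\infty}(q^{4};q^{5})_{\infty}(q^{5};q^{5})_{\infty}$; dividing by $(q)_{\infty}=(q;q^{5})_{\infty}(q^{2};q^{5})_{\infty}(q^{3};q^{5})_{\infty}(q^{4};q^{5})_{\infty}(q^{5};q^{5})_{\infty}$ produces the two products in \eqref{rrids}.

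I expect the main obstacle to be precisely this degenerate limit of Watson's identity: extracting the finite part from the singular very-well-poised sum, keeping the cancellations honest, and recognizing the collapsed one-sided sums as halves of the two modulus-$5$ theta series. The identification of convergents and the series evaluation are routine once the linear fractional reduction is in hand. As an alternative to the triple-product step one may instead record the one-sided sums directly and invoke the Rogers--Slater identities \eqref{a44} and \eqref{a46}, which sidesteps the bilateral bookkeeping at the cost of quoting those two evaluations; this is the variant route to the product forms that the preceding discussion alludes to.
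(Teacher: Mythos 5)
Your proposal is correct and is essentially the paper's own (Hirschhorn's) proof: the same specialization $a=b=0$, $c=d=1$ of Theorem \ref{1/qth}, the same identification of the convergents (which the paper leaves implicit but you make explicit via $C_{n+2}=A_{n}$, $D_{n+2}=A_{n}+B_{n}$), the same extraction of the finite part from the degenerate Watson limit in \eqref{Wat1}--\eqref{Wat2}, and the same Jacobi triple product step. One small caveat: your closing aside misdescribes the variant route, since the paper does not apply \eqref{a44} and \eqref{a46} to the one-sided sums arising from $a=b=0$ (those are theta-type sums, not the Rogers--Slater sums), but instead re-specializes with $-a=b=q^{1/2}$, $c=0$, $d=1$, so that the continued fraction becomes $1/K(q^{2})$ and \eqref{Wat1} produces exactly the left sides of \eqref{a44} and \eqref{a46}; as this is only an optional remark, it does not affect the validity of your proof.
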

These identities were first proved by L.J. Rogers in 1894
\cite{R94} in a paper that was completely ignored. They were
rediscovered (without proof) by Ramanujan sometime before 1913. In
1917, Ramanujan rediscovered Rogers's paper while browsing a
journal. Also in 1917, these identities were rediscovered and
proved independently by Issai Schur \cite{S17}. There are now many
different proofs.
\begin{proof}
First set $a=b= 0$, $c=1$ and $d=1$ in Theorem \ref{1/qth}, so
that the continued fraction in the theorem is \[ \frac{1}{1}
\+\frac{1}{K(q)}. \] By the first equalities at \eqref{Wat1} and
\eqref{Wat2}, {\allowdisplaybreaks
\begin{align*}
\lim_{n \to \infty} A_{n}(q)&=\lim_{n \to \infty} C_{n}=
\sum_{r\geq 0} \frac{  q^{r} q^{r(r-1)/2} q^{r(r-1)/2 }}
{(q)_{r}}= \sum_{r\geq
0} \frac{   q^{r^{2} }} {(q)_{r}},\\
\lim_{n \to \infty} B_{n}(q)&=\lim_{n \to \infty}D_{n}- C_{n}=
\sum_{r\geq 0} \frac{  q^{2r} q^{r(r-1)/2} q^{r(r-1)/2 }}
{(q)_{r}}= \sum_{r\geq 0} \frac{   q^{r^{2}+r }} {(q)_{r}}.
\end{align*}
} This proves the first equalities in each case of \eqref{rrids}.

By the second equalities at \eqref{Wat1} and \eqref{Wat2},
{\allowdisplaybreaks
\begin{align*}
\lim_{n \to \infty} A_{n}(q)&=\lim_{n \to \infty} C_{n}\\
&=\frac{1}{(q)_{\infty}} \left(1+ \sum_{r\geq 1} \frac{ (1-
q^{2r}) (-1)^{r} (q)_{r-1}q^{5r^{2}/2-r/2}} {(q)_{r}} \right)
\\
&= \frac{1}{(q)_{\infty}}\sum_{r=-\infty}^{\infty}
(-q^{1/2})^{r} (q^{5/2})^{r^{2}}\\
&= \frac{(q^{2};q^{5})_{\infty}(q^{3};q^{5})_{\infty}(q^{5};q^{5})_{\infty}}{(q)_{\infty}}\\
&= \frac{1}{(q;q^{5})_{\infty}(q^{4};q^{5})_{\infty}}.\\
\lim_{n \to \infty} B_{n}(q)&=\lim_{n \to \infty}D_{n}- C_{n}\\
&=\frac{1}{(q^{2})_{\infty}}\sum_{r\geq
0} \frac{ (1- q^{2r+1}) (-1)^{r} (q)_{r}q^{5r^{2}/2+3r/2}} {(1-q)(q)_{r}} \\
&= \frac{1}{(q)_{\infty}}\sum_{r=-\infty}^{\infty}
(-q^{3/2})^{r} (q^{5/2})^{r^{2}}\\
&= \frac{(q;q^{5})_{\infty}(q^{4};q^{5})_{\infty}(q^{5};q^{5})_{\infty}}{(q)_{\infty}}\\
&= \frac{1}{(q^{2};q^{5})_{\infty}(q^{3};q^{5})_{\infty}}.
\end{align*}
} This proves the second equalities in each case of \eqref{rrids}.
The penultimate equalities in each case above come from the Jacobi
triple product identity. This is Hirschhorn's proof in
\cite{HT80}.

However we desire to give an alternative proof for the infinite
product identities so let $-a=b=q^{1/2}$, $c=0$ and $d=1$ in
Theorem \ref{1/qth}, so that the continued fraction in the theorem
becomes $1/K(q^{2})$. Thus, from the second equality in
\eqref{Wat1}, {\allowdisplaybreaks \begin{align*} \lim_{n \to
\infty}B_{n}(q^{2}) = \lim_{n \to \infty}C_{n}
&=(-q^{3/2})_{\infty}(q^{3/2})_{\infty}
\sum_{r \geq 0} \frac{q^{3r(r-1)/2+3r}} {( q^{3/2})_{r}(-q^{3/2})_{r}(q)_{r}}\\
&=(q;q^{2})_{\infty}\sum_{r \geq 0} \frac{q^{3r(r+1)/2}} {(q;q^{2})_{r+1}(q)_{r}}\\
&=(q;q^{2})_{\infty}\frac{(q^{8};q^{10})_{\infty}(q^{2};q^{10})_{\infty}(q^{10};q^{10})_{\infty}}
{(q;q)_{\infty}}\\
&=\frac{1}{(q^{4};q^{10})_{\infty}(q^{6};q^{10})_{\infty}}.
\end{align*}
} For the third equality we have used \eqref{a44}. From the second
equality in \eqref{Wat1}, {\allowdisplaybreaks \begin{align*}
 \lim_{n \to \infty}D_{n}-C_{n}
&=q^{2}(-q^{5/2})_{\infty}(q^{5/2})_{\infty}
\sum_{r \geq 0} \frac{q^{3r(r-1)/2+5r}} {( q^{5/2})_{r}(-q^{5/2})_{r}(q)_{r}}\\
&=q^{2}(q;q^{2})_{\infty}\sum_{r \geq 0} \frac{q^{3r(r-1)/2+5r}} {(q;q^{2})_{r+2}(q)_{r}}\\
&=q^{2}(q;q^{2})_{\infty}\sum_{r \geq 0} \frac{q^{3r(r-1)/2+5r}(1-q^{r+1})} {(q;q^{2})_{r+2}(q)_{r+1}}\\
&=(q;q^{2})_{\infty}\sum_{r \geq 1} \frac{q^{(3r^{2}+r)/2}(1-q^{r})} {(q;q^{2})_{r+1}(q)_{r}}\\
&=(q;q^{2})_{\infty}\sum_{r \geq 0} \frac{q^{(3r^{2}+r)/2}(1-q^{r})} {(q;q^{2})_{r+1}(q)_{r}}\\
\Rightarrow \lim_{n \to \infty}A_{n}(q^{2})=\lim_{n \to
\infty}D_{n}
&=(q;q^{2})_{\infty}\sum_{r \geq 0} \frac{q^{(3r^{2}+r)/2}} {(q;q^{2})_{r+1}(q)_{r}}\\
&=(q;q^{2})_{\infty}\sum_{r \geq 0} \frac{(-q;q)_{r}q^{(3r^{2}+r)/2}} {(q)_{2r+1}}\\
&=(q;q^{2})_{\infty}\frac{(q^{6};q^{10})_{\infty}(q^{4};q^{10})_{\infty}(q^{10};q^{10})_{\infty}}
{(q;q)_{\infty}}\\
&=\frac{1}{(q^{2};q^{10})_{\infty}(q^{8};q^{10})_{\infty}}.
\end{align*}
} The result now follows after replacing $q^{2}$ by $q$.
\end{proof}


\section{A generalization of a continued fraction of Ramanujan}

Before coming to our results in this section we need to state some
notation and to recall some other necessary results.

We call $d_{0}+K_{n=1}^{\infty}c_{n}/d_{n}$ a \emph{canonical
contraction} of
 $b_{0}+K_{n=1}^{\infty}a_{n}/b_{n}$ if
\begin{align*}
&C_{k}=A_{n_{k}},& &D_{k}=B_{n_{k}}& &\text{ for }
k=0,1,2,3,\ldots \, ,\phantom{asdasd}&
\end{align*}
where $C_{n}$, $D_{n}$, $A_{n}$ and $B_{n}$ are canonical
numerators and denominators of $d_{0}+K_{n=1}^{\infty}c_{n}/d_{n}$
and $b_{0}+K_{n=1}^{\infty}a_{n}/b_{n}$ respectively. From
\cite{LW92} (page 85) we  have:
\begin{theorem}\label{odcf}
The canonical contraction of $b_{0}+K_{n=1}^{\infty}a_{n}/b_{n}$
with $C_{0}=A_{1}/B_{1}$
\begin{align*}
&C_{k}=A_{2k+1}& &D_{k}=B_{2k+1}& &\text{ for } k=1,2,3,\ldots \,
,&
\end{align*}
exists if and only if $b_{2k+1} \not = 0 for K=0,1,2,3,\ldots$,
and in this case is given by
\begin{multline}\label{E:odcf}
\frac{b_{0}b_{1}+a_{1}}{b_{1}} -
\frac{a_{1}a_{2}b_{3}/b_{1}}{b_{1}(a_{3}+b_{2}b_{3})+a_{2}b_{3}}
\-
\frac{a_{3}a_{4}b_{5}b_{1}/b_{3}}{a_{5}+b_{4}b_{5}+a_{4}b_{5}/b_{3}}\\
\- \frac{a_{5}a_{6}b_{7}/b_{5}}{a_{7}+b_{6}b_{7}+a_{6}b_{7}/b_{5}}
\- \frac{a_{7}a_{8}b_{9}/b_{7}}{a_{9}+b_{8}b_{9}+a_{8}b_{9}/b_{7}}
\+ \cds .
\end{multline}
\end{theorem}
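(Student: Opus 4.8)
The final statement is Theorem~\ref{odcf}, the even-part (canonical contraction) formula for a general continued fraction $b_{0}+K_{n=1}^{\infty}a_{n}/b_{n}$ under the contraction $C_{k}=A_{2k+1}$, $D_{k}=B_{2k+1}$. This is a purely structural identity about continued fraction convergents, and the cleanest route is to verify directly that the displayed continued fraction in \eqref{E:odcf} has numerator and denominator convergents equal to the odd-indexed convergents $A_{2k+1}$, $B_{2k+1}$ of the original. Since this is quoted from Lottner--Waadeland \cite{LW92}, one could simply cite it, but a self-contained proof is short and I would supply one.

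\smallskip

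\textbf{The plan.} The plan is to work entirely from the fundamental three-term recurrence for the convergents of $b_{0}+K_{n=1}^{\infty}a_{n}/b_{n}$, namely
\begin{equation*}
A_{n}=b_{n}A_{n-1}+a_{n}A_{n-2},\qquad B_{n}=b_{n}B_{n-1}+a_{n}B_{n-2},
\end{equation*}
with the standard initial data $A_{-1}=1$, $A_{0}=b_{0}$, $B_{-1}=0$, $B_{0}=1$. First I would derive, from these, a \emph{two-step} recurrence relating the odd-indexed convergents $A_{2k+1}$ to $A_{2k-1}$ and $A_{2k-3}$ (and likewise for $B$). The idea is to eliminate the even-indexed convergents: write $A_{2k+1}=b_{2k+1}A_{2k}+a_{2k+1}A_{2k-1}$ and $A_{2k}=b_{2k}A_{2k-1}+a_{2k}A_{2k-2}$, then substitute and re-express $A_{2k-2}$ through the recurrence one more level down, so that the resulting relation involves only odd indices. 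This yields a recurrence of the form $A_{2k+1}=\beta_{k}A_{2k-1}+\alpha_{k}A_{2k-3}$ with explicit $\alpha_{k},\beta_{k}$ built from the $a_{j},b_{j}$. Carrying this out and matching $\alpha_{k}$ and $\beta_{k}$ against the partial numerators and denominators displayed in \eqref{E:odcf} establishes that the contracted continued fraction has exactly the odd convergents of the original. The appearance of the ratios $b_{2k+1}/b_{2k-1}$ in \eqref{E:odcf} is precisely the equivalence-transformation factor needed to absorb the leftover $b$-coefficients produced during the elimination, so I would keep track of these scaling factors carefully.

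\smallskip

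\textbf{Key steps, in order.} Step one: set up the recurrence and compute the first few odd convergents explicitly (the cases $k=0,1$) to pin down the leading terms $C_{0}=(b_{0}b_{1}+a_{1})/b_{1}$ and the first partial quotient, which fixes the initial data for the contracted fraction and checks the formula in low order. Step two: perform the elimination of even-indexed convergents to obtain the two-step recurrence for $\{A_{2k+1}\}$ and $\{B_{2k+1}\}$; this is where the condition $b_{2k+1}\neq 0$ enters, since dividing by these factors is exactly what makes the contraction well defined (one sees this in the $b_{3}/b_{1}$, $b_{5}/b_{3}$, \ldots\ factors). Step three: read off the partial numerators $a_{n}'$ and denominators $b_{n}'$ of a continued fraction realizing this recurrence, and apply the equivalence transformation that rescales the convergents by the running products of $b_{2k+1}$ to bring them into the normalized shape shown in \eqref{E:odcf}. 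Step four: confirm that the resulting $C_{k},D_{k}$ coincide with $A_{2k+1},B_{2k+1}$ for all $k$, which is the definition of the canonical contraction.

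\smallskip

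\textbf{Main obstacle.} The hard part will be the bookkeeping in the elimination step: the two-step recurrence for the odd convergents does not come out in the normalized form $A_{n}'=b_{n}'A_{n-1}'+a_{n}'A_{n-2}'$ automatically, and reconciling it with \eqref{E:odcf} requires an equivalence transformation whose scaling factors are ratios of the odd-indexed $b_{j}$. Tracking these factors—and verifying that they combine correctly in both the $\alpha_{k}$ and $\beta_{k}$ coefficients so that the numerator/denominator ratio is preserved while the individual $C_{k},D_{k}$ equal $A_{2k+1},B_{2k+1}$—is the only genuinely delicate point; the rest is routine linear algebra on the recurrence. Because the statement is standard and referenced to \cite{LW92}, an acceptable alternative is simply to cite that source and indicate that the proof is a direct verification via the convergent recurrences, which is the route I expect the authors took.
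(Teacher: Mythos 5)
Your closing guess is exactly right: the paper gives no proof of this theorem at all. It is stated purely as a quotation from Lorentzen and Waadeland \cite{LW92} (page 85) and then used as a black box (in the proof of Corollary \ref{cphi}, where a continued fraction is replaced by its odd part), so there is no argument in the paper to compare yours against. Your sketch is the correct standard derivation and would work as written: from $A_{2k+1}=b_{2k+1}A_{2k}+a_{2k+1}A_{2k-1}$ and $A_{2k}=b_{2k}A_{2k-1}+a_{2k}A_{2k-2}$, solving $A_{2k-1}=b_{2k-1}A_{2k-2}+a_{2k-1}A_{2k-3}$ for $A_{2k-2}$ (this is exactly where the hypothesis on the odd-indexed $b$'s is needed) gives
\begin{equation*}
A_{2k+1}=\left(a_{2k+1}+b_{2k}b_{2k+1}+\frac{a_{2k}b_{2k+1}}{b_{2k-1}}\right)A_{2k-1}
-\frac{a_{2k-1}a_{2k}b_{2k+1}}{b_{2k-1}}\,A_{2k-3},
\end{equation*}
with the identical relation for the $B$'s; these coefficients are precisely the partial quotients in \eqref{E:odcf}, and the stray factors of $b_{1}$ in the first two partial quotients are the rescaling that forces $D_{0}=1$ and $C_{0}=A_{1}/B_{1}$ while keeping $C_{k}=A_{2k+1}$, $D_{k}=B_{2k+1}$ for $k\geq 1$ --- exactly the normalization you flag as the delicate point. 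Two small remarks: the ``only if'' half of the existence claim (that the canonical contraction fails to exist when some odd-indexed $b$ vanishes) needs its own short argument, which your sketch does not address; and the authors of \cite{LW92} are Lorentzen and Waadeland, not ``Lottner--Waadeland''.
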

The continued fraction \eqref{E:odcf} is called the \emph{odd}
part of $b_{0}+K_{n=1}^{\infty}a_{n}/b_{n}$.

We will also make use of the following theorem of Worpitzky (see
\cite{LW92}, pp. 35--36).

\begin{theorem}(Worpitzky)
 Let the continued fraction $K_{n=1}^{\infty}a_{n}/1$ be such that
$|a_{n}|\leq 1/4$ for $n \geq 1$. Then $K_{n=1}^{\infty}a_{n}/1$
converges.
 All approximants of the continued fraction lie in the disc $|w|<1/2$ and the value of the
continued fraction is in the disk $|w|\leq1/2$.\end{theorem}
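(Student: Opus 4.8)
The plan is to realize the continued fraction as an infinite composition of M\"obius transformations and to trap all of its approximants inside one fixed disc that is mapped into itself. I write $s_n(w)=a_n/(1+w)$ and $S_n=s_1\circ s_2\circ\cdots\circ s_n$, so that the $n$-th approximant of $\K{n=1}{\infty}a_n/1$ is $f_n=S_n(0)$, and $S_n(w)=(A_n+A_{n-1}w)/(B_n+B_{n-1}w)$, where $A_n,B_n$ are the canonical numerators and denominators. Everything is deduced from the single elementary estimate: if $|w|\le 1/2$, then $|1+w|\ge 1-|w|\ge 1/2$, hence $|s_n(w)|=|a_n|/|1+w|\le (1/4)/(1/2)=1/2$.

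First I would set $\bar V=\{w:|w|\le 1/2\}$ and $V^{\circ}=\{w:|w|<1/2\}$ and record from the estimate above that $s_n(\bar V)\subseteq\bar V$ and, with the inequalities made strict, $s_n(V^{\circ})\subseteq V^{\circ}$. Since $0\in V^{\circ}$, iterating gives $f_n=s_1(s_2(\cdots s_n(0)))\in V^{\circ}$, which already proves that every approximant lies in the open disc $|w|<1/2$; and because $\bar V$ is closed, the limit (once its existence is established) must lie in $\bar V$, i.e. in $|w|\le 1/2$.

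The substantive task is convergence. Set $V_n=S_n(\bar V)$. From $s_n(\bar V)\subseteq\bar V$ one gets $V_n=S_{n-1}(s_n(\bar V))\subseteq S_{n-1}(\bar V)=V_{n-1}$, so the $V_n$ form a nested sequence of closed discs, and $f_m=S_n(s_{n+1}\circ\cdots\circ s_m(0))\in S_n(\bar V)=V_n$ for every $m\ge n$. Thus it suffices to show that the radius $R_n$ of $V_n$ tends to $0$: then $\bigcap_n V_n$ is a single point to which the $f_m$ are forced to converge. Using the standard formula for the image of a disc under a M\"obius map, together with the determinant identity $A_{n-1}B_n-A_nB_{n-1}=(-1)^n a_1a_2\cdots a_n$, I would compute
\[
R_n=\frac{\tfrac12\,|a_1a_2\cdots a_n|}{\,|B_n|^2-\tfrac14|B_{n-1}|^2\,},
\]
the denominator being positive precisely because the pole $-B_n/B_{n-1}$ of $S_n$ lies outside $\bar V$, equivalently $|B_n|>\tfrac12|B_{n-1}|$, which follows by induction from $B_n=B_{n-1}+a_nB_{n-2}$ and $|a_n|\le 1/4$.

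The main obstacle is showing $R_n\to 0$, i.e. bounding $|B_n|^2-\tfrac14|B_{n-1}|^2$ below relative to $|a_1\cdots a_n|$. The difficulty is genuine and is concentrated in the parabolic boundary case: when $a_n\equiv-1/4$ one finds $B_n=(n+1)2^{-n}$, so $R_n=1/\bigl(2(2n+1)\bigr)$ decays only like $1/n$ rather than geometrically, and for complex $a_n$ the cross term $2\,\mathrm{Re}(a_nB_{n-2}\bar B_{n-1})$ in the expansion of $|B_n|^2$ must be controlled carefully. I would resolve this either by establishing a recursion/monotonicity for the positive quantities $D_n:=|B_n|^2-\tfrac14|B_{n-1}|^2$ forcing $D_n/|a_1\cdots a_n|\to\infty$, or, more cleanly, by reducing to the \'{S}leszy\'{n}ski--Pringsheim theorem: the equivalence transformation with $r_n=2$ rescales $\K{n=1}{\infty}a_n/1$ so that every partial denominator becomes $2$ and every partial numerator has modulus at most $1$, whence the hypothesis $2\ge 1+1$ of \'{S}leszy\'{n}ski--Pringsheim holds; that theorem (proved by the analogous but easier nested-disc argument on the unit disc $|w|<1$) then yields convergence and, with it, $R_n\to 0$. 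Either route completes the proof.
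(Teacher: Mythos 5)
The paper never proves this theorem: it is quoted verbatim from Lorentzen and Waadeland \cite{LW92} (pp.\ 35--36) as a known tool, so there is no internal proof to compare against and your attempt must be judged on its own merits. The first half of your argument is correct and complete: the estimate $|s_n(w)|=|a_n|/|1+w|\le (1/4)/(1/2)=1/2$ shows that the closed disc $|w|\le 1/2$ is a value set (strictly on the open disc), which gives both that every approximant lies in $|w|<1/2$ and that the limit, once it exists, lies in $|w|\le 1/2$. Your nested-disc bookkeeping is also sound: the determinant identity, the radius formula with $\rho=1/2$, the induction giving $|B_n|>\tfrac12|B_{n-1}|$, and the parabolic example $a_n\equiv -1/4$ (where $B_n=(n+1)2^{-n}$ and $R_n=1/(2(2n+1))$) all check out, and that example correctly identifies why no naive geometric bound can work.

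The convergence half is where your two proposed routes differ in status. Route (a) is not a proof; it only names the quantity $D_n/|a_1\cdots a_n|$ that would have to tend to infinity. Route (b), however, is a correct and essentially complete reduction: the equivalence transformation $r_n=2$ produces partial denominators $2$ and partial numerators $2a_1$, $4a_n$ of modulus at most $1$, equivalence transformations preserve the approximants, and the \'Sleszy\'nski--Pringsheim hypothesis $|b_n|\ge |a_n|+1$ holds; this is in fact exactly how the textbook the paper cites derives Worpitzky's theorem. The one genuine error is your parenthetical claim that \'Sleszy\'nski--Pringsheim is itself ``proved by the analogous but easier nested-disc argument on the unit disc'': a value-set argument for that theorem runs into precisely the obstacle you identified (nestedness alone never forces the radii to zero, and convergence of $S_n(0)$ does not imply $R_n\to 0$ either, contrary to your closing remark). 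The standard elementary proof is different: from $|b_n|\ge|a_n|+1$ one shows by induction that $|B_n|\ge |B_{n-1}|+|a_1a_2\cdots a_n|$, whence
\begin{equation*}
|f_n-f_{n-1}|=\frac{|a_1a_2\cdots a_n|}{|B_n||B_{n-1}|}\le \frac{1}{|B_{n-1}|}-\frac{1}{|B_n|},
\end{equation*}
and the approximants are Cauchy by telescoping. If you either cite \'Sleszy\'nski--Pringsheim as known or prove it this way, your route (b) closes the argument; as written, the proof of the one ingredient carrying all the difficulty is mischaracterized.
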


The following identity can be found in Ramanujan's notebooks
\cite{R57} (a proof can be found in \cite{B98}).

\textbf{Entry17 (p.374)}. Let $a$, $b$ and $q$ be complex numbers,
with $|q|<1$. Define
\begin{equation*}
\phi(a) =
\sum_{n=0}^{\infty}\frac{q^{(n^2+n)/2}a^{n}}{(q;q)_{n}(-bq;q)_{n}}.
\end{equation*}
Then
\begin{equation}\label{phicf}
\frac{\phi(a)}{\phi(aq)} = 1+ \frac{aq}{1} \+ \frac{bq}{1} \+
\frac{aq^2}{1} \+ \frac{bq^2}{1} \+ \frac{aq^3}{1} \+
\frac{bq^3}{1} \+ \cds .
\end{equation}
The continued fraction in \eqref{phicf} is the special case
$\lambda =0$ of the following more general continued fraction,
which can be found in the lost notebook \cite{S88}. Set
\[
F(a,b,\lambda,q) = 1+ \frac{aq+\lambda q}{1} \+ \frac{bq+\lambda
q^2}{1} \+ \frac{aq^2+\lambda q^3}{1} \+ \frac{bq^2+\lambda
q^4}{1} \+ \cds
\]
and
\[
G(a,b,\lambda, q)= \sum_{n=0}^{\infty} \frac{(-\lambda
/a)_{n}q^{(n^2+n)/2}a^{n}} {(q;q)_{n}(-bq;q)_{n}}.
\]
Then
\begin{equation}\label{FGeq}
F(a,b,\lambda, q)= \frac{G(a,b,\lambda,q)}{G(aq,b,\lambda q,q)}.
\end{equation}
A number of authors have given proofs of \eqref{FGeq}, including
Hirschhorn \cite{H80}.

We now evaluate two other continued fractions which also
specialize to give the continued fraction in \eqref{phicf}.

\begin{corollary}\label{cphi}
Let $a$, $b$, $e$ and $q$ be complex numbers with $|q|<1$ and
$|e|<1/4$.

(i) Set \[ \phi(x)=\sum_{j
=0}^{\infty}\frac{\left(\frac{x}{e+1}\right)^{j}\left(\frac{e
aq}{x(e+1)}\right)_{j}\,q^{j(j+1)/2}}{(q)_{j}\left (\frac{-b
q}{e+1} \right )_{j}}, \] and define
\[H_{2}(a,b,e,q):=1+ \frac{aq}{1} \+ \frac{bq+e}{1} \+ \frac{aq^2}{1}
\+ \frac{bq^2+e}{1} \+ \frac{aq^3}{1} \+ \frac{bq^3+e}{1} \+ \cds
.
\]
If  $e \not = - b q^{n}$, for $n \geq 1$, then
\begin{equation}\label{phicfn}
H_{2}(a,b,e,q)=\frac{\phi(a)}{\phi(aq)}.
\end{equation}

(ii) Set \[ \phi(x)=\sum_{j
=0}^{\infty}\frac{\left(\frac{x}{e+1}\right)^{j}\left(\frac{e
b}{a(e+1)}\right)_{j}\,q^{j(j+1)/2}}{(q)_{j}\left (\frac{-b
q}{e+1} \right )_{j}}, \] and define
\[H_{3}(a,b,e,q):=1+ \frac{aq+e}{1} \+ \frac{bq}{1} \+ \frac{aq^2+e}{1}
\+ \frac{bq^2}{1} \+ \frac{aq^3+e}{1} \+ \frac{bq^3}{1} \+ \cds .
\]
If  $e \not = - a q^{n}$, for $n \geq 1$, then
\begin{equation}\label{phicfn2}
H_{3}(a,b,e,q)=(e+1)\frac{\phi(a)}{\phi(aq)}.
\end{equation}

\end{corollary}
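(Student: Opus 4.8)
The plan is to collapse each of these alternating continued fractions onto the canonical shape $H_{1}(a,b,c,d,q)$ evaluated in Theorem \ref{1/qth}, by passing to the \emph{odd part} (Theorem \ref{odcf}) and then reading off the two $q$-series as $\phi(a)$ and $\phi(aq)$. I will carry out part (i) in full and then indicate the changes for (ii).

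First I would settle convergence, since this is the only place the hypothesis $|e|<1/4$ is used. In $H_{2}$ the partial numerators are $aq^{k}$ and $bq^{k}+e$, which tend to $0$ and to $e$ as $k\to\infty$; hence a tail of $H_{2}-1$ has all partial numerators eventually of modulus $<1/4$, and Worpitzky's theorem guarantees that $H_{2}$ converges. This legitimizes replacing $H_{2}$ by its odd part, whose approximants are the odd-indexed approximants of $H_{2}$ and therefore share its limit.

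Next I would feed $H_{2}$ into the odd-part formula \eqref{E:odcf}, with $b_{0}=1$, all $b_{n}=1$, $a_{2k-1}=aq^{k}$, $a_{2k}=bq^{k}+e$. Converting the resulting minus-continued fraction to a plus-continued fraction (which merely flips the signs of all partial numerators) gives
\[
(1+aq) \+ \frac{-abq^{2}-aeq}{(1+e)+bq+aq^{2}} \+ \cds \+ \frac{-abq^{2k}-aeq^{k}}{(1+e)+bq^{k}+aq^{k+1}} \+ \cds .
\]
Writing $\mathcal A,\mathcal B,\mathcal C,\mathcal D$ for the four parameters $a,b,c,d$ of Theorem \ref{1/qth} (to avoid a clash with $a,b$), the tail above is exactly the tail of $H_{1}(\mathcal A,\mathcal B,\mathcal C,\mathcal D,q)$: matching $(\mathcal A+\mathcal B)q^{n+1}+\mathcal D$ with $(1+e)+(b+aq)q^{n+1}$ forces $\mathcal A+\mathcal B=b+aq$ and $\mathcal D=e+1$, while matching the partial numerators forces $\mathcal A\mathcal B=abq$ and $\mathcal C=-aeq$. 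Since $t^{2}-(b+aq)t+abq=(t-b)(t-aq)$, the choice $\mathcal A=b$, $\mathcal B=aq$ works, and so $H_{2}=(1+aq)+\bigl(1/H_{1}(b,aq,-aeq,e+1,q)-1\bigr)$.

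Now I would invoke \eqref{H1lim} under $(a,b,c,d)\mapsto(b,aq,-aeq,e+1)$. A short computation shows that its denominator series is precisely $\phi(aq)$, that the prefactor collapses to $-a(e+bq)/((e+1)+bq)$, and that its numerator series is $S=\sum_{j\ge 0}(aq/(e+1))^{j}(e/(e+1))_{j}q^{(j+1)(j+2)/2}/\bigl((q)_{j}(-bq^{2}/(e+1))_{j}\bigr)$. Substituting back, the claim \eqref{phicfn} becomes equivalent to the single $q$-series identity
\[
\phi(a)=(1+aq)\,\phi(aq)-\frac{a(e+bq)}{(e+1)+bq}\,S .
\]
This last identity is the main obstacle. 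I expect to prove it by comparing coefficients of $a^{n}$ on the two sides — each side being an explicit $q$-hypergeometric sum, the identity reduces to a ratio-of-Pochhammer relation that telescopes — or, more conceptually, by establishing a three-term contiguous relation for $\phi$ that unwinds directly into $H_{1}$. Part (ii) is entirely parallel: the odd part of $H_{3}$ uses $a_{2k-1}=aq^{k}+e$, $a_{2k}=bq^{k}$, which yields the same denominators, the substitution $\mathcal C=-beq$, leading term $1+aq+e$, and (after the analogous $q$-series identity) the extra factor $e+1$ in \eqref{phicfn2}; so I would present (i) completely and only flag the few changes needed for (ii).
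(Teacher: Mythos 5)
Your reduction of part (i) is correct and is in fact the paper's own argument: Worpitzky's theorem applied to a tail gives convergence, passing to the odd part (Theorem \ref{odcf}) turns $H_{2}-1-aq$ into the tail of $H_{1}(b,aq,-aeq,e+1,q)$, and \eqref{H1lim} then reduces \eqref{phicfn} to the identity $\phi(a)=(1+aq)\,\phi(aq)-\frac{a(e+bq)}{(e+1)+bq}\,S$; your prefactor and your identification of the denominator series with $\phi(aq)$ both check out. The first gap is that you stop exactly at this identity, calling it ``the main obstacle'' and only predicting that coefficient comparison will settle it. It does --- and this is precisely how the paper finishes (i) --- because with your assignment $\mathcal{A}=b$, $\mathcal{B}=aq$ the Pochhammer parameter $-\mathcal{C}/(\mathcal{B}\mathcal{D})=e/(e+1)$ is free of $a$, so the $j$-th term of every series involved is homogeneous of degree $j$ in $a$, and equating coefficients of $a^{j}$ yields a finite three-term algebraic identity, which the paper verifies by an explicit bracket computation. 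As written, though, your proof of (i) is incomplete at its central step.

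The more serious gap is part (ii), which is \emph{not} ``entirely parallel,'' and your one-sentence dismissal hides the point where the method genuinely changes. With $\mathcal{C}=-beq$, the assignment that makes the denominator series of \eqref{H1lim} equal to $\phi(aq)$ produces the Pochhammer parameter $eb/(a(e+1))$, which involves $a$: the $j$-th term $(a/(e+1))^{j}\bigl(eb/(a(e+1))\bigr)_{j}$ is now a polynomial in $a$ of degrees $0$ through $j$, so the coefficient of $a^{n}$ on each side of the ``analogous $q$-series identity'' is an infinite sum over $j\geq n$, and the telescoping comparison that works in (i) collapses. (The opposite assignment $\mathcal{A}=aq$, $\mathcal{B}=b$ restores a parameter $eq/(e+1)$ free of $a$ and $b$, but then the two series produced by \eqref{H1lim} are not $\phi(a)$ and $\phi(aq)$.) The paper resolves this with an ingredient your outline never mentions: it first proves, by the (i)-style argument, an evaluation of $H_{3}$ as a ratio of series whose Pochhammer parameter is free of $a$ and $b$, and then applies the symmetry identity \eqref{absym1} of Corollary \ref{cram} (itself a consequence of Watson's transformation) to the numerator and the denominator separately, converting them into $\phi(a)$ and $\phi(aq)$ and producing the factor $e+1$ in \eqref{phicfn2}. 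Without \eqref{absym1}, or some substitute for it, your plan for (ii) does not go through.
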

Remarks: (i) Note that the value $e=0$ gives Ramanujan's identity
\eqref{phicf} in each case.\\
(ii) The results may also hold for some $e$ satisfying $|e| \geq
1/4$, but we do not explore that here.
\begin{proof}
(i) The condition on $e$ ensures, by Worpitzky's Theorem,
 that the continued fraction above converges and hence equals
its odd part, which is {\allowdisplaybreaks
\begin{multline*}
1+aq -\frac{aq(bq+e)}{aq^2+bq+e+1} \-
\frac{aq^2(bq^2+e)}{aq^3+bq^2+e+1} \-
\cds\\
\- \frac{aq^{n+1}(bq^{n+1}+e)}{aq^{n+2}+bq^{n+1}+e+1} \-
\cds\\
=1+aq +\frac{(-aqb)q+(-aqe)}{(aq+b)q+e+1} \+
\frac{(-aqb)q^3+(-aqe)q}{(aq+b)q^2+e+1} \+
\cds\\
\+ \frac{(-aqb)q^{2n+1}+(-aqe)q^{n}}{(aq+b)q^{n+1}+e+1} \+
\cds .\\
\end{multline*}
} Theorem \ref{1/qth} applied to the continued fraction
$H_{1}(b,aq, -aqe,e+1,q)$ gives that {\allowdisplaybreaks
\begin{multline*} \frac{(-aqb)q+(-aqe)}{(aq+b)q+e+1} \+
\frac{(-aqb)q^3+(-aqe)q}{(aq+b)q^2+e+1} \+
\cds\\
\phantom{sdadssaasddsadasdadasdaaasdadad}\+
\frac{(-aqb)q^{2n+1}+(-aqe)q^{n}}{(aq+b)q^{n+1}+e+1} \+
\cds \\
= \frac{ \displaystyle{ \frac{-aq(e+bq)}{e+1} \sum_{j
=0}^{\infty}\frac{(a/(e+1))^{j}(e/(e+1))_{j}\,q^{j(j+5)/2}}{(q)_{j}(-bq/(e+1))_{j+1}}}
} {\displaystyle{ \sum_{j
=0}^{\infty}\frac{(a/(e+1))^{j}(e/(e+1))_{j}\,q^{j(j+3)/2}}{(q)_{j}(-bq/(e+1))_{j}}}}.
\end{multline*}
} Thus {\allowdisplaybreaks
\begin{multline*} H_{2}(a,b,e,q)\\= 1+aq
+\frac{ \displaystyle{ \frac{-aq(e+bq)}{e+1} \sum_{j
=0}^{\infty}\frac{(aq/(e+1))^{j}(e/(e+1))_{j}\,q^{j(j+3)/2}}{(q)_{j}(-bq/(e+1))_{j+1}}}
} {\displaystyle{ \sum_{j
=0}^{\infty}\frac{(aq/(e+1))^{j}(e/(e+1))_{j}\,q^{j(j+1)/2}}{(q)_{j}(-bq/(e+1))_{j}}}}.
\end{multline*}
} Hence the result will be true if {\allowdisplaybreaks
\begin{multline*} (1+a q)\sum_{j
=0}^{\infty}\frac{(aq/(e+1))^{j}(e/(e+1))_{j}\,q^{j(j+1)/2}}{(q)_{j}(-bq/(e+1))_{j}}\\
- \frac{aq(e+bq)}{e+1} \sum_{j
=0}^{\infty}\frac{(aq/(e+1))^{j}(e/(e+1))_{j}\,q^{j(j+3)/2}}{(q)_{j}(-bq/(e+1))_{j+1}}\\
=\sum_{j =0}^{\infty}\frac{\left(a/(e+1)\right)^{j}\left(e
q/(e+1)\right)_{j}\,q^{j(j+1)/2}}{(q)_{j}\left (-b q/(e+1) \right
)_{j}}.
\end{multline*}
} This follows easily by considering both sides as power series in
$a$ and comparing coefficients. The coefficient of $a^{0}$ is
clearly seen to be 1 on each side. For $j\geq 1$, the coefficient
of $a^{j}$ on the left side is {\allowdisplaybreaks
\begin{multline*}
\frac{(q/(e+1))^{j}(e/(e+1))_{j}\,q^{j(j+1)/2}}{(q)_{j}(-bq/(e+1))_{j}}\\
+q
\frac{(q/(e+1))^{j-1}(e/(e+1))_{j-1}\,q^{j(j-1)/2}}{(q)_{j-1}(-bq/(e+1))_{j-1}}\\
\phantom{asdasdaasadasdadassd}-
\frac{q(e+bq)}{e+1}\frac{(q/(e+1))^{j-1}(e/(e+1))_{j-1}\,q^{(j-1)(j+2)/2}}{(q)_{j-1}(-bq/(e+1))_{j}}\\
=\frac{(1/(e+1))^{j}(e/(e+1))_{j-1}\,q^{j(j+1)/2}}{(q)_{j}(-bq/(e+1))_{j}}\times
 \bigg ( \left(1-\frac{e
   q^{j-1}}{e+1}\right) q^j
   \phantom{asdasdaassd}\\
  \phantom{asdasdasdaassd}  +(e+1) \left(1-q^j\right)
   \left(1+\frac{bq^j}{e+1}\right)-(e+bq) \left(1-q^j\right) q^{j-1} \bigg
   )\\
   =\frac{(1/(e+1))^{j}(eq/(e+1))_{j}\,q^{j(j+1)/2}}{(q)_{j}(-bq/(e+1))_{j}}.\phantom{asdasdaasadaasdasdasddasadassd}
\end{multline*}
} (ii) The proof in this case is initially similar to that of (i).
We first show, using similar reasoning to that in part (i), that
\[ H_{3}(a,b,e,q)= \frac{ \displaystyle{ (e+1) \sum_{j
=0}^{\infty}\frac{(b/(e+1))^{j}(e/(e+1))_{j}\,q^{j(j+1)/2}}{(q)_{j}(-aq/(e+1))_{j}}
} } {\displaystyle{ \frac{1}{1+aq/(e+1)}}\sum_{j
=0}^{\infty}\frac{(b/(e+1))^{j}(e/(e+1))_{j}\,q^{j(j+1)/2}}{(q)_{j}(-aq^{2}/(e+1))_{j}}}.
\]
The identity at \eqref{absym1} is now applied to the numerator and
denominator in this last expression and \eqref{phicfn2} follows.
\end{proof}
\emph{Acknowledgements.} We would like to thank the referee for
his careful reading of the paper and his many suggestions which
improved its layout.

 \allowdisplaybreaks{

}

\end{document}